\newcommand{\barY}{\overline{Y}}
\newtheorem{theorem}{Theorem}
\newtheorem{condition}[theorem]{Condition}
\newtheorem{corollary}[theorem]{Corollary}
\newtheorem{definition}[theorem]{Definition}
\newtheorem{lemma}[theorem]{Lemma}
\newtheorem{remark}[theorem]{Remark}
\begin{document}

\author{Christian Bayer}
\address{Christian Bayer \\ Weierstrass Institute Berlin, Germany}
\email{Christian.Bayer@wias-berlin.de}

\author{Peter K.\ Friz}
\address{Peter K.\ Friz \\ Technische Universit\"at Berlin and Weierstrass Institute Berlin, Germany}
\email{friz@math.tu-berlin.de}

\author{Sebastian Riedel}
\address{Sebastian Riedel \\ Technische Universit\"at Berlin, Germany}
\email{riedel@math.tu-berlin.de}

\author{John Schoenmakers}
\address{John Schoenmakers \\ Weierstrass Institute Berlin, Germany}
\email{schoenma@wias-berlin.de}

\keywords{Gaussian processes, stochastic differential equations, numerical methods for stochastic equations, Monte Carlo methods}

\subjclass[2010]{60G15, 60H10, 60H35, 65C05, 65C30}

\title{From rough path estimates to multilevel Monte Carlo}

\begin{abstract}
  New classes of stochastic differential equations can now be studied using 
  rough path theory (e.g. Lyons et al. \cite{LCL07} or Friz--Hairer \cite{FH14}).  In this paper we investigate,
  from a numerical analysis point of view, stochastic differential equations
  driven by Gaussian noise in the aforementioned sense. Our focus lies on
  numerical implementations, and more specifically on the saving possible via
  multilevel methods. Our analysis relies on a subtle combination of pathwise
  estimates, Gaussian concentration, and multilevel ideas.  Numerical examples
  are given which both illustrate and confirm our findings.
\end{abstract}


\maketitle

\section{Introduction}

We consider implementable schemes for large classes of stochastic
differential equations (SDEs) 
\begin{equation}
dY_{t}=V_{0}\left( Y_{t}\right) dt+\sum_{i=1}^{d}V_{i}\left( Y_{t}\right)
\,dX_{t}^{i}\left( \omega \right)   \label{SDEIntro}
\end{equation}
driven by multidimensional Gaussian signals, say $X=X_{t}\left( \omega \right)
\in \mathbb{R}^{d}.$ The interpretation of these equations is in Lyons'
\textit{rough path} sense \cite{LQ02, LCL07, FV10, FH14}
.  In essence, one deals with generalizations of classical
Stratonovich SDE meaning for such equations.  One requires some
smoothness/boundedness conditions on the vector fields $V_{0}$ and $V\equiv
\left( V_{1},\dots ,V_{d}\right) $; for the sake of this introduction, the
reader may assume bounded vector fields with bounded derivatives of all order
(but we will be more specific later). This also requires a ``natural'' lift of
$X\left( \cdot ,\omega \right) $ to a (random) rough path
\begin{equation}
  \label{eq:enhanced-path}
  \mathbf{X}_{t}\left( \omega \right) = \sum_{i=1}^N \int_{0<s_1 <
    \cdots < s_i < t} dX_{s_1}(\omega) \otimes \cdots \otimes
  dX_{s_i}(\omega), 
\end{equation}
see, e.g., \cite[Ch.~3]{LCL07}. In~\eqref{eq:enhanced-path}, $N$ is
related to the roughness of $X$. For instance, in the case of Brownian motion,
we need $N = 2$, but rougher processes $X$ require $N > 2$.
The reader not familiar with rough path theory may think
of $Y$ as ``Stratonovich'' solution to (\ref{SDEIntro}). In fact, $Y$ is known
to be the Wong-Zakai limit, obtained by replacing $X$ in (\ref{SDEIntro})
by piecewise-linear approximation followed by taking the mesh-to-zero limit.

We shall simplify the discussion by choosing $V_{0}\equiv 0$  and using the
short-hand notation
\begin{equation}
dY_t = V\left( Y_t \right) d\mathbf{X}_t,  \label{RDEIntro}
\end{equation}
indicating that the differential equation is driven by the rough path
$\mathbf{X}$ given in~\eqref{eq:enhanced-path}.  Of course, it would be easy
to include equations of the form~\eqref{SDEIntro} into the
framework~\eqref{RDEIntro}, e.g., by including time $t$ as an additional
(smooth) component of the noise $X$. This setting includes, for instance,
fractional Brownian motion (fBm) with Hurst parameter $H>1/4$, see
\cite{CQ02}. It may help the reader to recall that, in the case when $X=B$, a
multidimensional Brownian motion, all this amounts to enhance $B$ with
\textit{L\'{e}vy's stochastic area} or, equivalently, with all iterated
stochastic integrals of $B$ against itself, say $\mathbb{B}%
_{s,t}=\int_{s}^{t}B_{s,u}\otimes dB_u$. The (rough-)pathwise solution concept
then agrees with the usual notion of an SDE solution (in It\^{o}- or
Stratonovich sense, depending on which integration was used in defining $%
\mathbb{B}$). As is well-known this provides a robust extension of the usual
It\^{o} framework of stochastic differential equations with an exploding
number of new applications (including non-linear SPDE theory, robustness of
the filtering problem, non-Markovian H\"{o}rmander theory).

Gaussian noise more general than Brownian motion is also interesting from a
modelling point of view. Many stochastic models are based on independent
identically distributed noise terms, which lead to dynamics driven by
standard Brownian motion (or similar L\'{e}vy processes) under
re-scaling. Arguably, this choice is often based on convenience, since the
resulting Markovian models are fairly simple to analyze. Besides, quantities
of interest in Markovian models are often relatively easy to compute, with
different numerical techniques available. On the other hand, non-trivial
correlations in the noise can lead to more general (Gaussian) processes (under
re-scaling) such as fractional Brownian motion. These models are typically
much more difficult to both analyze and compute, but may well be adequate to
explain features of the underlying real world phenomena. For instance, some
recent studies in finance report excellent fits of (simple) asset price models
driven by fractional Brownian motions to market data, substantially improving
the performance of models (even more complex ones) driven by standard Brownian
motion. We refer to \cite{GJR14a,BFG15} for more information. The importance
of fractional Brownian motion for models of turbulence has been understood
since the 1940s, see, for instance, \cite{SS94}. Rough path analysis provides
a framework for analysis and numerics for these kind of models.

In a sense, the rough path interpretation of a differential equation is
closely related to strong, pathwise error estimates of Euler-
resp.~Milstein-approximation to stochastic differential equations. For
instance, Davie's definition \cite{D07} of a (rough)pathwise SDE solution is 
\begin{equation}
Y_{t}-Y_{s}\equiv Y_{s,t}=V_{i}\left( Y_{s}\right)
B_{s,t}^{i}+V_{i}^{k}\left( Y_{s}\right) \partial _{k}V_{j}\left(
Y_{s}\right) \mathbb{B}_{s,t}^{i,j}+o\left( \left\vert t-s\right\vert
\right) \text{ as } t - s\downarrow 0,  \label{DavieRHS}
\end{equation}
where we employ Einstein's convention. In fact, this becomes an entirely
\textit{deterministic} definition, only assuming
\begin{equation*}
\exists \alpha \in \left( 1/3,1/2\right) :\left\vert B_{s,t}\right\vert \leq
C\left\vert t-s\right\vert ^{\alpha },\left\vert \mathbb{B}_{s,t}\right\vert
\leq C\left\vert t-s\right\vert ^{2\alpha },\text{ \ \ }
\end{equation*}%
something which is known to hold true almost surely (i.e. for $C=C\left(
\omega \right) <\infty $ a.s.), and something which is not at all restricted
to\ Brownian motion. As the reader may suspect this approach leads to
almost-sure convergence (with rates) of schemes which are based on the
iteration of the approximation seen in the right-hand-side of (\ref{DavieRHS}%
). The practical trouble is that L\'{e}vy's area, the anti-symmetric part of $%
\mathbb{B}$, is notoriously difficult to simulate; leave alone the
simulation of L\'{e}vy's area for other Gaussian processes. It has been
understood for a while, at least in the Brownian setting, that the truncated
(or: simplified) Milstein scheme, in which L\'{e}vy's area is omitted, i.e.
replace $\mathbb{B}_{s,t}$ by $Sym\left( \mathbb{B}_{s,t}\right) $ in (\ref%
{DavieRHS}), still offers benefits: For instance, Talay~\cite{Tal86}
replaces L\'{e}vy area by suitable Bernoulli r.v. such as to obtain weak
order $1$ (see also Kloeden--Platen \cite{KP92} and the references therein).%
\footnote{%
A well-known counter-example by Clark and Cameron \cite{CC80} shows that it
is impossible to get strong order $1$ if only using Brownian increments.} In
the multilevel context, \cite{GS14} use this truncated Milstein scheme
together with a sophisticated antithetic (variance reduction) method.
Finally, in the rough path context this scheme was used in \cite{DNT12}: the
convergence of the scheme can be traced down to an underlying Wong-Zakai
type approximation for the driving random rough path -- a (probabilistic!)
result which is known to hold in great generality for stochastic processes,
starting with \cite{CQ02} in the context of fractional Brownian motion, see 
\cite[Ch.~15]{FV10} and the references therein.

A rather difficult problem is to go from almost-sure convergence (with
rates) to $L^{1}$ (or ever: $L^{r}$ any $r<\infty $) convergence. Indeed, as
pointed out in \cite[Remark 1.2]{DNT12}: \textit{"Note that the almost sure
estimate [for the simplified Milstein scheme] cannot be turned into an }$%
L^{1}$\textit{-estimate [...]. This is a consequence of the use of the rough
path method, which exhibits non-integrable (random) constants."} The
resolution of this problem forms the first contribution of this paper. It is
based on some recent progress \cite{CLL13}, see also \cite{FR12b}, initially developed to
prove smoothness of laws for (non-Markovian) SDEs driven by Gaussian signals
under a H\"{o}rmander condition, \cite{CF10, HP13}.

Having established $L^{r}$-convergence (any $r<\infty $, with rates) for
implementable ``simplified'' Milstein schemes we move to the second
contribution of the paper: a multilevel algorithm, in the sense of Giles \cite{G08b}, for
stochastic differential equations driven by large classes of Gaussian
signals. 

The savings here are rather dramatic. In absence of Markovian structure, the
strong rate must proxy for the weak rate, which leaves one with complexity
$\mathcal{O}(\epsilon^{-\theta-2})$, any $\theta > 2 \rho / ( 2- \rho)$ where
the parameter $\rho$ quantifies the roughness of the noise. With multilevel,
we reduce this to $\mathcal{O}(\epsilon^{-\theta})$. For instance, in the case
of fBm with $H=0.4$, one has $\rho = 1.25$ and thus $\theta \sim 3.33$, which
is only marginally worse than single level Monte Carlo in the
  Brownian motion context (where one has $\theta = 3$). On the other hand, the
  computational cost of single-level Monte Carlo would be proportional to
  $\varepsilon^{-\theta-2} \sim \varepsilon^{-5.33}$ which corresponds to a
numerically non-feasible situation.

A \textit{strong}, $L^{2}$ error estimate (``rate $%
\beta /2$'') is the key assumption in Giles' complexity theorem, and this is
precisely what we have established in the first part. Some other extension of
the Giles theorem are necessary; indeed it is crucial to allow for a weak rate
of convergence $\alpha <1/2$ (ruled out explicitly in \cite{G08b}) whenever we
deal with driving signals with sample path regularity ``worse'' then Brownian
motion. We are able to do all this, moreover we carefully keep track of the
relevant constants in front of the asymptotic terms, a necessity in such an
irregular regime. 

Let us now discuss the algorithm in more detail. We consider the following
scheme for approximating $Y$, see~\cite{DNT12,FV10}. Given an equi-distant
dissection $D=\left( t_{k}\right) $ of $\left[ 0,T\right] $ with mesh $h$, so
that $t_{k+1}-t_{k}\equiv h$ for all $k$, write $X_{t_{k},t_{k+1}}$ for the
corresponding increments. We then define $\overline{Y}_{0}\equiv Y_{0}$ and
the ``simplified'' step-$3$ Euler scheme
\begin{equation}
\overline{Y}_{t_{k+1}}=\overline{Y}_{t_{k}}+\sum_{l=1}^{3}\frac{1}{l!}%
V_{i_{1}}\cdots V_{i_{l}}I\left( \overline{Y}_{t_{k}}\right)
X_{t_{k},t_{k+1}}^{i_{1}}\cdots X_{t_{k},t_{k+1}}^{i_{l}},
\label{eq:simplified-stepN-euler}
\end{equation}%
where $I(y)=y$ is the identity function and the vector fields $V_{1},\ldots
,V_{d}$, unless otherwise stated assumed bounded with bounded derivatives of all orders, are viewed as linear first order operators. 
Whenever
convenient we extend $\overline{Y}$ to $\left[ 0,T\right] $ by linear
interpolation. Moreover, the Einstein summation convention is in force. For a
more detailed description of the algorithm we refer to Section
\ref{sec:prob-conv-results}. We are now able to state (a simple version of) our main results;
cf. Corollary \ref{cor:strong_rates_smooth_vf}:

\begin{theorem}[Strong rates]
\label{Thm1Intro}Let $X=\left( X^{1},\dots ,X^{d}\right) $ be a continuous,
zero-mean Gaussian process with independent components. Assume furthermore
that each component has stationary increments and that%
\begin{equation*}
\sigma ^{2}\left( t-s\right) :=E\left\vert X_{t}^{i}-X_{s}^{i}\right\vert
^{2}
\end{equation*}%
where $\sigma ^{2}$ is concave and $\sigma ^{2}\left( \tau \right) =\mathcal{O}\left(
\tau ^{1/\rho }\right) $ as $\tau \rightarrow 0$ for some $\rho \in \lbrack
1,2).$\newline
Let $Y$ be the solution to the rough differential equation (\ref{RDEIntro})
driven by (the rough path lift) of $X$ and $\overline{Y}=\overline{Y}%
^{h}$ be the approximate solution based on (\ref%
{eq:simplified-stepN-euler}). Then we have strong convergence of (almost)
rate $1/\rho -1/2$. More precisely, for any $1\leq r<\infty $ and $\delta >0$%
, there exists a constant $C$ such that  
\begin{equation*}
\left\vert E\left( \sup_{t\in \left[ 0,T\right] }\left\vert Y_{t}-\overline{Y%
}_{t}^{h}\right\vert ^{r}\right) \right\vert ^{\frac{1}{r}}\leq
C h^{1/\rho -1/2-\delta }.
\end{equation*}
\end{theorem}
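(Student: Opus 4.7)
My plan is to split the overall error into a Wong--Zakai piece and a deterministic numerical piece, bound each of them pathwise via rough-path stability, and then use the sharp Gaussian integrability of the relevant rough-path functionals to upgrade the bounds to $L^{r}$. Let $X^{D}$ denote the piecewise-linear interpolation of $X$ along the grid $D$ and let $Y^{D}$ be the classical ODE solution driven by $X^{D}$. I decompose
\begin{equation*}
Y - \overline{Y}^{h} \;=\; (Y - Y^{D}) \;+\; (Y^{D} - \overline{Y}^{h}).
\end{equation*}
The second summand is purely deterministic once $X|_{D}$ is given: on each sub-interval the scheme~\eqref{eq:simplified-stepN-euler} is exactly the degree-$3$ Taylor expansion of the flow generated by the corresponding straight-line segment, so standard ODE error analysis controls $|Y^{D}-\overline{Y}^{h}|$ by a finite sum of fourth-order powers of the increments $X_{t_{k},t_{k+1}}$. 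After taking $L^{r}$ moments this contribution is dominated by $h^{1/\rho - 1/2 - \delta}$ on the whole range $\rho \in [1,2)$, and can be absorbed into the Wong--Zakai piece.

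For the Wong--Zakai summand I invoke local Lipschitz continuity of the It\^o--Lyons solution map in the inhomogeneous $p$-variation rough-path metric (with $p$ slightly larger than $2\rho$). This gives the pathwise bound
\begin{equation*}
\sup_{t\in[0,T]} \bigl|Y_{t} - Y^{D}_{t}\bigr| \;\leq\; C(\omega)\,\varrho_{p\text{-var};[0,T]}\bigl(\mathbf{X},\mathbf{X}^{D}\bigr),
\end{equation*}
with $C(\omega)$ a locally Lipschitz function of the rough-path norms $\|\mathbf{X}\|_{p\text{-var}}$ and $\|\mathbf{X}^{D}\|_{p\text{-var}}$. The hypotheses on $\sigma^{2}$ (stationarity, concavity, and $\sigma^{2}(\tau)=\mathcal{O}(\tau^{1/\rho})$) force finite $\rho$-variation of the covariance of $X$ on $2$D rectangles, and a by-now standard Gaussian rough-path computation then produces, for every $q<\infty$ and $\delta>0$,
\begin{equation*}
\bigl\|\varrho_{p\text{-var};[0,T]}(\mathbf{X},\mathbf{X}^{D})\bigr\|_{L^{q}} \;\lesssim\; h^{1/\rho - 1/2 - \delta}.
\end{equation*}

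The crucial step, which resolves the Deya--Neuenkirch--Tindel objection quoted in the introduction, is to convert the pathwise estimate into an $L^{r}$ estimate despite the fact that $C(\omega)$ depends roughly exponentially on $\|\mathbf{X}\|_{p\text{-var}}^{p}$ and is therefore only Fernique-integrable. Here I appeal to the integrability theorem of Cass--Litterer--Lyons~\cite{CLL13} and its extension in~\cite{FR12b}: under the assumed Gaussian setup, the greedy-partition control function governing the It\^o--Lyons Lipschitz constant has Weibull tails, and so $C(\omega)$ has moments of every order. A H\"older split with the $L^{q}$ Wong--Zakai bound of the previous paragraph then delivers the asserted $L^{r}$ rate $h^{1/\rho-1/2-\delta}$.

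The principal obstacle is precisely this last step: without a Cass--Litterer--Lyons type integrability estimate the Lipschitz constant of the It\^o--Lyons map grows too fast in the driver for a naive H\"older split to work, and this is exactly the obstruction pointed to in~\cite{DNT12}. A secondary, more bookkeeping-level, point is to track how the constant in the statement depends on $r$ and $\delta$, so that the estimate can be fed into the multilevel complexity theorem discussed in the second half of the paper.
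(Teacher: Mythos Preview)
Your decomposition and overall strategy match the paper's, but there is a real gap in how you handle the second summand $Y^{D}-\overline{Y}^{h}$. Calling this ``standard ODE error analysis'' understates the difficulty: the local Taylor remainder on each sub-interval is indeed $O(|X_{t_{k},t_{k+1}}|^{4})$, but to pass from local to global error you must transport each local error to time $T$ through the flow of $dY^{D}=V(Y^{D})\,dX^{D}$. The naive (one-variation) Gronwall constant for that flow is $\exp\bigl(C\sum_{k}|X_{t_{k},t_{k+1}}|\bigr)$, and this sum is of order $h^{1/(2\rho)-1}\to\infty$ as $h\downarrow 0$; the resulting bound has no uniform-in-$h$ moments. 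The paper closes this gap by proving a rough-path Euler error estimate (Theorem~\ref{prop_euler_rates}) in which the propagation constant is $\exp\bigl(C\,N_{\alpha,[0,T]}(\mathbf{x})\bigr)$ rather than $\exp\bigl(C\Vert\mathbf{x}\Vert_{p\text{-var}}^{p}\bigr)$, applied to $\mathbf{x}=\mathbf{X}^{D}$. Together with the uniform-in-$h$ Weibull tails of $N_{\alpha}(\mathbf{X}^{D})$ this yields the $L^{r}$ rate (Theorem~\ref{theorem_euler_lp}). In other words, the Cass--Litterer--Lyons mechanism is needed for the Euler piece just as much as for the Wong--Zakai piece; it cannot simply be ``absorbed''.

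A related point concerns the Wong--Zakai argument itself. The results of \cite{CLL13,FR12b} give Weibull tails for $N_{\alpha}$ and hence moment bounds for RDE \emph{solutions}, but they do not by themselves replace the classical Lipschitz constant $\exp\bigl(C\Vert\mathbf{x}\Vert_{p\text{-var}}^{p}\bigr)$---which for $p>2$ is not even Fernique-integrable, contrary to what you write---by an integrable one. The paper's Theorem~\ref{cor_loc_lip_integrability_unif_top} supplies precisely this missing deterministic ingredient: a Lipschitz estimate for the It\^o--Lyons map with prefactor $\exp\bigl(C(N_{\alpha}(\mathbf{x}^{1})+N_{\alpha}(\mathbf{x}^{2})+1)\bigr)$. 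This is what makes your H\"older split legitimate, and it is one of the paper's new contributions rather than an off-the-shelf fact.
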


The reader should notice that the assumption on $X$ is met by
multidimensional Brownian motion (with $\rho =1$) in which case $Y$ is
nothing but a Stratonovich solution of the SDE (\ref{SDEIntro}), which of
course may be rewritten as It\^{o} equation. More interestingly, $X$ may be
a fractional Brownian motion (with $\rho = \frac{1}{2H}>1$) in the (interesting)
``rougher than Brownian'' regime $H\in (1/4,1/2)$. However,
  stationarity of increments of $X$ plays very little role aside from allowing
an easy-to-state formulation above. The precise technical requirement is given
in Condition~\ref{mixed_hoel} below and is satisfied in many examples, see \cite{FGGR16}.

Using Giles' multi-level
Monte Carlo methodology, we can greatly improve the complexity bounds for the
discretization algorithm~\eqref{eq:simplified-stepN-euler}, see
Theorem~\ref{thr:fBM-ml-comp}. Note that the overall
  computational cost depends on the computational cost of simulating the
  increments of the process $X$ as needed in
  equation~\eqref{eq:simplified-stepN-euler}. For each trajectory of $Y$, we
  need to sample a vector of increments of $X$ of length $N \equiv T/h$,
  i.e., we need to sample from an $N$-dimensional normal distribution with
  known non-diagonal covariance matrix. In general, this can be achieved at
  cost proportional to $N^2$ by multiplication of a standard normal vector
  with the lower triangular factor obtained by the Cholesky factorization of
  the covariance matrix. In many special cases, for instance for fBm, the cost
  can be reduced to $N\log N$, see~\cite{D04}, and we concentrate on this
  scenario below. The precise statement is given in
  Theorem~\ref{thr:fBM-ml-comp}.

\begin{theorem}[Multilevel complexity estimate]
  \label{Thm2Intro}Let $X$ and $Y$ be as in the previous theorem and $%
  f:C([0,T],\R^{m})\rightarrow \R^{n}$ a Lipschitz continuous
  functional. Assume that the computational cost of generating a
    vector of (non-overlapping) increments of $X$ of length $N$ is
    proportional to $N \log N$. Then the Monte Carlo evaluation of a
  path-dependent functional of the form%
\begin{equation*}
E(f(Y_{t}:0\leq t\leq T))
\end{equation*}
to within a MSE of $\varepsilon^{2}$, can be achieved with computational
work
\begin{equation*}
  \mathcal{O}\left( \varepsilon ^{-\theta }\right), \quad \forall \theta
  >\frac{2\rho}{2-\rho}.
\end{equation*}
\end{theorem}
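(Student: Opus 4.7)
The strategy is to reduce Theorem~\ref{Thm2Intro} to a (suitably extended) version of Giles' complexity theorem, with the strong and weak rates both supplied by Theorem~\ref{Thm1Intro}. First, since $f$ is Lipschitz on path space (with the sup-norm), Theorem~\ref{Thm1Intro} with $r=1$ immediately gives the weak rate
\begin{equation*}
\bigl|E[f(Y)-f(\overline{Y}^{h})]\bigr|\leq \mathrm{Lip}(f)\,E\sup_{t}|Y_{t}-\overline{Y}_{t}^{h}|\leq C\,h^{1/\rho-1/2-\delta},
\end{equation*}
so the weak rate is $\alpha=1/\rho-1/2-\delta$. The same theorem with $r=2$, applied to the coupled pair $(\overline{Y}^{h_{\ell}},\overline{Y}^{h_{\ell-1}})$ generated from the same underlying Gaussian path on dyadic meshes $h_{\ell}=T2^{-\ell}$, controls the MLMC variance on level $\ell$: by the triangle inequality
\begin{equation*}
\mathrm{Var}\bigl(f(\overline{Y}^{h_{\ell}})-f(\overline{Y}^{h_{\ell-1}})\bigr)\leq E\bigl|f(\overline{Y}^{h_{\ell}})-f(\overline{Y}^{h_{\ell-1}})\bigr|^{2}\leq C\,h_{\ell}^{\beta},
\end{equation*}
with strong rate $\beta/2=1/\rho-1/2-\delta$, hence $\beta=2/\rho-1-2\delta$.

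Next I would invoke the multilevel complexity theorem in the form that allows weak rate $\alpha<1/2$ (Giles' original statement forbids this, but the proof goes through with minor bookkeeping; this will be stated separately as the extended Giles theorem in the paper). With cost per sample on level $\ell$ proportional to $h_{\ell}^{-1}$, and in the regime $\beta<1$ that we are in for $\rho>1$, the optimally allocated MLMC estimator achieves RMSE $\varepsilon$ at cost
\begin{equation*}
\mathcal{O}\bigl(\varepsilon^{-2-(1-\beta)/\alpha}\bigr).
\end{equation*}
Plugging in the values of $\alpha$ and $\beta$ from the previous step and letting $\delta\downarrow 0$,
\begin{equation*}
2+\frac{1-\beta}{\alpha}=2+\frac{2(1-1/\rho)}{(2-\rho)/(2\rho)}+o(1)=\frac{2\rho}{2-\rho}+o(1),
\end{equation*}
which gives the claimed exponent for any $\theta>2\rho/(2-\rho)$.

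Two points require care. The first is the extension of Giles' theorem to $\alpha<1/2$: the potential issue is that the number of levels $L$ needed to push the bias below $\varepsilon$ grows like $\alpha^{-1}\log(1/\varepsilon)$, so logarithmic prefactors must be tracked and absorbed into the arbitrarily small $\delta$; this is routine but has to be done explicitly. The second, and genuine, obstacle is the variance bound: producing $C h_{\ell}^{\beta}$ for the \emph{difference} of fine and coarse simplified Milstein approximants on the same Gaussian sample, not merely for $\overline{Y}^{h_{\ell}}-Y$. This is handled by applying Theorem~\ref{Thm1Intro} once on level $\ell$ and once on level $\ell-1$ (both coupled to the common limit $Y$) and using the triangle inequality in $L^{2}$; the key input is that the $L^{r}$-estimate of Theorem~\ref{Thm1Intro} is uniform in the mesh, which is precisely the contribution of the first part of the paper via the non-trivial integrability results from \cite{CLL13,FR12b}. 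Assembling these ingredients yields Theorem~\ref{Thm2Intro}.
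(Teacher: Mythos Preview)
Your proposal is correct and follows essentially the same route as the paper: use the Lipschitz property of $f$ together with Theorem~\ref{Thm1Intro} (at $r=1$ for the bias and $r=2$ plus the $L^{2}$-triangle inequality through the common limit $Y$ for the level variances) to verify the hypotheses of the extended Giles complexity theorem with $\alpha=\beta/2=1/\rho-1/2-\delta$, then read off the exponent $2/\beta\to 2\rho/(2-\rho)$. One small remark: in the regime $\beta<1$, $\alpha=\beta/2$ the extended Giles theorem (Theorem~\ref{thr:ml-complexity}) already yields a pure power complexity without logarithmic corrections, so your worry about absorbing log factors into $\delta$ is unnecessary here (it is the cost-per-path condition~(iv), cf.~Remark~\ref{rem:complexity-simulation-fBM}, where the $\delta$-slack is actually used).
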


As a sanity check, let us compare this results with the corresponding,
well-known results for classical stochastic differential equations (here: in
Stratonovich sense) driven by $d$-dimensional Brownian motion $B$.  The
assumptions on $X$ are clearly met with $\rho =1$. As a consequence, we obtain
strong convergence of (almost) rate $1/2$ in agreement with the well-known
strong rate $1/2$ in the classical setting. Concerning our multilevel
complexity estimate, we obtain (almost) order $\varepsilon ^{-2}$ which is
arbitrarily "close" to known result $\mathcal{O}\left( \varepsilon ^{-2}\left(
    \log \varepsilon \right) ^{2}\right) $ \cite{G08a, G08b}, recently
sharpened to $\mathcal{O}\left( \varepsilon ^{-2}\right) $ \cite{GS14} with
the aid of a suitable antithetic multilevel correction estimator.

Let us summarize the (computational) benefits of the multilevel approach in the
present (``rougher than Brownian'') setting. A direct Monte
Carlo implementation of the scheme~\eqref{eq:simplified-stepN-euler} would
require a complexity of $\mathcal{O}(\varepsilon ^{-(2+1/\alpha )})$ in order to
attain an MSE of no more than $\varepsilon ^{2}$. Here, $\alpha $ is the
\emph{weak} rate of convergence of the scheme. On the other hand, we show in
Theorem~\ref{thr:ml-complexity} that the complexity is only
$\mathcal{O}(\varepsilon^{-(1+2\alpha-\beta)/\alpha})$ for the multi-level Monte
Carlo estimator, where $\beta$ is two times the strong rate of
convergence. Thus, when the weak rate of convergence is equal to the strong
rate of convergence\footnote{By lack of the Markov property, the standard
  techniques of deriving weak error estimates fail in the setting of an RDE
  driven by a general Gaussian process such as a fBm. Thus, computing the weak
  rate of convergence for the simplified Euler scheme would be a non-trivial
  task. On the other hand, we present a numerical example in
  Section~\ref{sec:numer-exper}, where the weak order is equal to the strong
  order even in a standard Brownian motion setting.}, then the complexity of
the multi-level estimator is reduced by a factor $\varepsilon^2$ as compared by
the complexity of the standard Monte Carlo estimator. When the weak rate is
two times the strong rate, the speed up is still by a factor $\varepsilon$, see
Table~\ref{tab:summary-general} and Table~\ref{tab:complexity-summ-fbm}.

\section{Rough path estimates revisited}\label{sec:rp_est_rev}

In this section, we revisit some classical estimates used in rough paths
theory. Definitions of the basic objects and all relevant notation may be
found in the appendix. A more detailed account to the theory of rough paths may be found in the monographs \cite{LQ02}, \cite{LCL07}, \cite{FV10} or  \cite{FH14}. 

Versions of the results we are interested in (cf. the forthcoming theorems \ref{cor_loc_lip_integrability_unif_top} and \ref{prop_euler_rates}) are already stated in the above mentioned references, and the given estimates are (essentially) sharp when the oscillations of the driving rough path (i.e. its $p$-variation) become small. However, it turns out that they are less useful when its oscillations get large. In this case, we will show that the estimates can be improved by replacing the occurring $p$-variation norm of the rough path $\mathbf{x}$ in the inequality by another quantity $N(\mathbf{x})$ which was first introduced by Cass-Litterer-Lyons in \cite{CLL13} (and which we recall in Definition \ref{def:greedy_partitions} below). This becomes crucial when substituting the deterministic rough path $\mathbf{x}$ by the lift of a Gaussian process $\mathbf{X}$: A key result in \cite{CLL13} states that the quantity $N(\mathbf{X})$ enjoys significantly better integrability properties than the $p$-variation of $\mathbf{X}$. 

The aim of this section is to show that one can indeed improve the bound for the Lipschitz constant of the It\^o-Lyons map (Theorem \ref{cor_loc_lip_integrability_unif_top}) and the estimate for the distance of the solution of a rough differential equation (RDE) and its Euler (or Milstein) approximation (Theorem \ref{prop_euler_rates}). This will allow us to deduce the desired probabilistic estimates in the forthcoming section \ref{sec:prob_conv_for_rdes}.

\subsection{Improved bounds for the Lipschitz constant of the It\^o-Lyons map}



Recall the following definition, taken from \cite{CLL13}:

\begin{definition}\label{def:greedy_partitions}
Let $\omega $ be a control function, that is, a continuous function $\omega \colon \{(s,t)\,:\, 0 \leq s \leq t \leq T\} \to [0,\infty)$ for which $\omega(s,t) + \omega(t,u) \leq \omega(s,u)$ holds for every $s \leq t \leq u$. For $\alpha >0$ and $\left[ s,t\right]
\subset \left[ 0,T\right] $, we set 
\begin{eqnarray*}
\tau _{0}\left( \alpha \right)  &=&s \\
\tau _{i+1}\left( \alpha \right)  &=&\inf \left\{ u:\omega \left( \tau
_{i}(\alpha),u\right) \geq \alpha ,\tau _{i}\left( \alpha \right) <u\leq t\right\}
\wedge t
\end{eqnarray*}%
and define 
\begin{equation*}
N_{\alpha ,\left[ s,t\right] }\left( \omega \right) =\sup \left\{ n\in 
\mathbb{N\cup }\left\{ 0\right\} :\tau _{n}\left( \alpha \right) <t\right\} .
\end{equation*}%
When $\omega $ arises from the (homogenous) $p$-variation norm $\Vert \cdot
\Vert _{p-\text{var}}$ of a ($p$-rough) path, $\mathbf{x}$, i.e. $\omega _{%
\mathbf{x}}=\left\Vert \mathbf{x}\right\Vert _{p\text{-var;}\left[ \cdot
,\cdot \right] }^{p}$ with $p \geq 1$, we shall also write $N_{\alpha ,\left[ s,t\right]
}\left( \mathbf{x}\right) :=N_{\alpha ,\left[ s,t\right] }\left( \omega _{%
\mathbf{x}}\right) $.
\end{definition}

It is easy to see that $\alpha N_{\alpha ,[ 0,T] }\left( \mathbf{x}\right) \leq \| \mathbf{x} \|_{p-\text{var};[0,T]}^p$ (\cite[Lemma 4.9]{CLL13}), and this is sharp (as one can see choosing $\alpha \nearrow \| \mathbf{x} \|_{p-\text{var};[0,T]}^p$). However, for fixed $\alpha$, the tail estimates for $N_{\alpha ,[ 0,T] }\left( \mathbf{X}\right)$ are significantly better than for $\| \mathbf{X} \|_{p-\text{var};[0,T]}^p$ when we consider Gaussian lifts $\mathbf{X}$, cf. \cite{CLL13} and \cite{FR12b}. 

Next, we give the main result of this section. The following theorem is a variant of \cite[Theorem 10.38]{FV10}. The main difference is that in \cite[Theorem 10.38]{FV10}, the Lipschitz constant is (essentially) given by $C \exp \left\{ C\left( \| \mathbf{x}^{1}\|_{p-\text{var};[0,T]}^p + \| \mathbf{x}^{2}\|_{p-\text{var};[0,T]}^p \right) \right\}$, whereas in the following theorem, $\|\mathbf{x}^{i}\|_{p-\text{var};[0,T]}^p$ is replaced by $N_{\alpha ,[0,T]}(\mathbf{x}^{i})$, $i = 1,2$.

\begin{theorem}
\label{cor_loc_lip_integrability_unif_top} Consider the RDEs%
\begin{equation*}
dy_{t}^{i}=V^{i}(y_{t}^{i})\,d\mathbf{x}_{t}^{i};\quad y_{0}^{i}\in \mathbb{R%
}^{e}
\end{equation*}%
for $i=1,2$ on $[0,T]$ where $V^{1}$ and $V^{2}$ are two families of vector
fields, $\gamma >p$ and $\nu $ is a bound on $|V^{1}|_{Lip^{\gamma }}$ and $%
|V^{2}|_{Lip^{\gamma }}$. Then for every $\alpha >0$ there is a constant $%
C=C(\gamma ,p,\nu ,\alpha )$ such that%
\begin{eqnarray*}
\left\vert y^{1}-y^{2}\right\vert _{\infty \text{;}\left[ 0,T\right] } &\leq
&\ C\left[ |y_{0}^{1}-y_{0}^{2}|+\left\vert V^{1}-V^{2}\right\vert _{\text{%
Lip}^{\gamma -1}}+\rho _{p-\text{var};[0,T]}(\mathbf{x}^{1},\mathbf{x}^{2})%
\right] \\
&&\times \exp \left\{ C\left( N_{\alpha ,[0,T]}(\mathbf{x}^{1})+N_{\alpha
,[0,T]}(\mathbf{x}^{2})\right) \right\}
\end{eqnarray*}%
holds.
\end{theorem}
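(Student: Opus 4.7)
The plan is to localise the classical Lipschitz estimate for the Itô--Lyons map onto small intervals on which both driving paths have $p$-variation controlled by $\alpha^{1/p}$, and then propagate the error across these intervals. The output $\exp\{C(N_{\alpha}(\mathbf{x}^{1}) + N_{\alpha}(\mathbf{x}^{2}) + 1)\}$ then appears simply as the number of intervals in a common refinement, each contributing a constant multiplicative factor.

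First I would construct a common partition. Writing $(\tau_{i}^{1})$ and $(\tau_{j}^{2})$ for the greedy stopping times associated with $\omega_{\mathbf{x}^{1}}$ and $\omega_{\mathbf{x}^{2}}$ at level $\alpha$, let $\sigma_{0}=0<\sigma_{1}<\cdots <\sigma_{M}=T$ be their union. By construction
\begin{equation*}
M \;\leq\; N_{\alpha,[0,T]}(\mathbf{x}^{1}) + N_{\alpha,[0,T]}(\mathbf{x}^{2}) + 1,
\end{equation*}
and on every subinterval $[\sigma_{k},\sigma_{k+1}]$ both driving rough paths satisfy $\|\mathbf{x}^{j}\|_{p\text{-var};[\sigma_{k},\sigma_{k+1}]}^{p}\leq \alpha$. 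Thus on each such interval we are in the ``small $p$-variation'' regime where the classical local Lipschitz estimate for the Itô--Lyons map (e.g.\ Friz--Victoir, Ch.~10) applies with a constant $K=K(\gamma,p,\nu,\alpha)$ that is \emph{independent of $k$}.

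Next I would concatenate. The local estimate on $[\sigma_{k},\sigma_{k+1}]$ gives, for $\Delta_{k}:=|y^{1}-y^{2}|_{\infty;[0,\sigma_{k}]}$,
\begin{equation*}
\Delta_{k+1} \;\leq\; K\,\Delta_{k} \;+\; K\Bigl[\,|V^{1}-V^{2}|_{\mathrm{Lip}^{\gamma-1}} + \rho_{p\text{-var};[\sigma_{k},\sigma_{k+1}]}(\mathbf{x}^{1},\mathbf{x}^{2})\Bigr],
\end{equation*}
because the local Lipschitz dependence on the initial datum, on the vector fields, and on the driver is each bounded by the same $K$ under the smallness hypothesis. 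Iterating this affine recursion yields
\begin{equation*}
\Delta_{M} \;\leq\; K^{M}\bigl[\,|y_{0}^{1}-y_{0}^{2}| + |V^{1}-V^{2}|_{\mathrm{Lip}^{\gamma-1}} + \rho_{p\text{-var};[0,T]}(\mathbf{x}^{1},\mathbf{x}^{2})\bigr]\cdot\sum_{k=0}^{M-1}K^{-k},
\end{equation*}
where for the driver distance I use the superadditivity of $\rho_{p\text{-var}}^{p}$ (hence subadditivity of $\rho_{p\text{-var}}$ across a partition). Absorbing the geometric sum into a larger constant and writing $K^{M}=\exp(M\log K)$ gives exactly the claimed bound with $C=C(\gamma,p,\nu,\alpha)$.

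The main obstacle I foresee is the local Lipschitz step itself: one has to track that the constant appearing in front of the three contributions (initial datum, vector field distance, driver distance) can be chosen uniformly in the interval once $\omega_{\mathbf{x}^{j}}\leq \alpha$, so that it does not deteriorate from step to step. This is where one needs the quantitative version of the continuity of the Itô--Lyons map in all three arguments simultaneously, as in \cite{FV10}; once this local statement is in place, the greedy partition of \cite{CLL13} and the recursion above give the result essentially for free. A small additional bookkeeping issue is that the bound must be stated in $|\cdot|_{\infty}$ rather than in $p$-variation, which is immediate since on each subinterval the uniform norm of the difference is controlled by the local Lipschitz estimate.
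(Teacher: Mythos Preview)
Your proposal is correct but takes a genuinely different route from the paper. The paper does \emph{not} partition via the union of the two greedy sequences. Instead it builds a single auxiliary control
\[
\omega_{\mathbf{x}^{1},\mathbf{x}^{2}}(s,t)=\|\mathbf{x}^{1}\|_{p\text{-var};[s,t]}^{p}+\|\mathbf{x}^{2}\|_{p\text{-var};[s,t]}^{p}+\sum_{k=1}^{\lfloor p\rfloor}\frac{\bigl(\rho_{p\text{-var};[s,t]}^{(k)}(\mathbf{x}^{1},\mathbf{x}^{2})\bigr)^{p/k}}{\bigl(\rho_{p\text{-var};[0,T]}^{(k)}(\mathbf{x}^{1},\mathbf{x}^{2})\bigr)^{p/k}},
\]
so that $\|\mathbf{x}^{i}\|_{p-\omega}\le 1$ and $\rho_{p-\omega}\le\rho_{p\text{-var}}$. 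It then runs the iteration once, with respect to this single $\omega$ (Lemma~\ref{lemma_boundinftydist}), and afterwards has to convert $N_{\alpha}(\omega_{\mathbf{x}^{1},\mathbf{x}^{2}})$ back into $N_{\alpha'}(\mathbf{x}^{1})+N_{\alpha'}(\mathbf{x}^{2})$ via two separate lemmas (Lemmas~\ref{lemma_Nsumofcontrols} and~\ref{lemma_Nsumofcontrolssecondbdd}) that control $N_{\alpha}$ of a sum of controls.

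Your approach is more elementary: by taking the union of the two greedy partitions you obtain the bound $M\le N_{\alpha}(\mathbf{x}^{1})+N_{\alpha}(\mathbf{x}^{2})+1$ directly, with no need for those two lemmas. The price is that your ``local Lipschitz step'' must output $\rho_{p\text{-var};[\sigma_{k},\sigma_{k+1}]}$ rather than $\rho_{p-\omega}$; this does follow from \cite[Theorem~10.26]{FV10}, but only after choosing on each subinterval a control that includes the normalised distance terms (exactly the trick above, applied locally), which is worth making explicit. Conversely, the paper's single-control route, while less direct, upgrades more readily from $|\cdot|_{\infty}$ on the output to the $p$-variation distance (as noted in the Remark following the theorem).
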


The proof of Theorem \ref{cor_loc_lip_integrability_unif_top} will be given at the end of this section. We first prove some preparatory lemmata. Recall that if $\omega^1$ and $\omega^2$ are controls, also $\omega^1 +
\omega^2$ is a control.

\begin{lemma}
\label{lemma_Nsumofcontrols} Let $\omega^1$ and $\omega^2$ be two controls.
Then 
\begin{align*}
N_{\alpha,[s,t]}(\omega^1 + \omega^2) \leq 2N_{\alpha,[s,t]}(\omega^1) +
2N_{\alpha,[s,t]}(\omega^2) + 2
\end{align*}
for every $s<t$ and $\alpha>0$.
\end{lemma}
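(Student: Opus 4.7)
The plan is to bound $M := N_{\alpha,[s,t]}(\omega^1+\omega^2)$ from above by partitioning the associated greedy intervals into pairs and distributing them between $\omega^1$ and $\omega^2$ by a pigeonhole argument. Let $\sigma_0 = s < \sigma_1 < \cdots < \sigma_M < t$ denote the $\alpha$-greedy sequence for the control $\omega := \omega^1 + \omega^2$. By continuity and the definition of the infimum, $\omega(\sigma_k,\sigma_{k+1}) = \alpha$ for $0 \le k < M$.

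The second step is a dyadic pairing. Set $m := \lfloor M/2 \rfloor$ and, for $l = 0, \dots, m-1$, consider the disjoint pair-intervals $[\sigma_{2l}, \sigma_{2l+2}]$. Each carries total $\omega$-mass $2\alpha$, so by additive splitting at least one of $\omega^1(\sigma_{2l},\sigma_{2l+2}) \ge \alpha$ or $\omega^2(\sigma_{2l},\sigma_{2l+2}) \ge \alpha$ must hold. Color $l$ by a $j(l)\in\{1,2\}$ witnessing this inequality and let $J_j := \{l : j(l) = j\}$; then $|J_1| + |J_2| \ge m$.

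The key step is then the greedy-comparison estimate $|J_j| \le N_{\alpha,[s,t]}(\omega^j)$. The intervals $\{[\sigma_{2l},\sigma_{2l+2}]\}_{l\in J_j}$ are disjoint, all lie inside $[s,\sigma_M] \subset [s,t)$, and each has $\omega^j$-mass $\ge \alpha$. Writing them as $[a_1,b_1],\dots,[a_K,b_K]$ with $K = |J_j|$ and $b_K \le \sigma_M < t$, I would show by induction that the $\alpha$-greedy sequence $\tau^j_\bullet(\alpha)$ for $\omega^j$ satisfies $\tau^j_l \le b_l$ for every $l \le K$: the base case is $\tau^j_0 = s \le a_1$, and the inductive step follows from $\omega^j(\tau^j_l, b_{l+1}) \ge \omega^j(a_{l+1},b_{l+1}) \ge \alpha$, which forces $\tau^j_{l+1} \le b_{l+1}$. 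In particular $\tau^j_K \le b_K < t$, so by the definition of $N_{\alpha,[s,t]}(\omega^j)$ as $\sup\{n : \tau^j_n < t\}$, we obtain $N_{\alpha,[s,t]}(\omega^j) \ge K$.

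Combining these gives $\lfloor M/2 \rfloor \le N_{\alpha,[s,t]}(\omega^1) + N_{\alpha,[s,t]}(\omega^2)$, hence
\[
M \le 2\,N_{\alpha,[s,t]}(\omega^1) + 2\,N_{\alpha,[s,t]}(\omega^2) + 1,
\]
which is stronger than the claimed bound. The only subtle point is the greedy-comparison induction in step three, where one must carefully distinguish between strict and non-strict inequalities at the endpoint $t$; the fact that every $b_l$ is bounded by $\sigma_M$, which is itself strictly less than $t$, is exactly what makes the argument go through without an off-by-one loss.
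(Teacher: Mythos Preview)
Your argument is correct and in fact yields the slightly sharper bound $N_{\alpha,[s,t]}(\omega^1+\omega^2)\le 2N_{\alpha,[s,t]}(\omega^1)+2N_{\alpha,[s,t]}(\omega^2)+1$. The route, however, is genuinely different from the paper's. The paper introduces the auxiliary accumulated $\alpha$-variation
\[
\omega_\alpha(s,t)=\sup_{\substack{D=(t_i)\subset[s,t]\\ \omega(t_i,t_{i+1})\le\alpha}}\sum_i\omega(t_i,t_{i+1}),
\]
observes that $\bar\omega_\alpha\le\omega^1_\alpha+\omega^2_\alpha$ (since a dissection admissible for $\bar\omega$ is admissible for each summand), and then invokes an external estimate from \cite{CLL13} of the form $\omega^i_\alpha(s,t)\le\alpha(2N_{\alpha,[s,t]}(\omega^i)+1)$; combining this with the trivial lower bound $\alpha N_{\alpha,[s,t]}(\bar\omega)\le\bar\omega_\alpha(s,t)$ gives the lemma. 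Your pigeonhole/greedy-comparison argument is entirely self-contained, avoids the reference to \cite{CLL13}, and delivers the better constant. The paper's approach, on the other hand, exhibits the quantity $\omega_\alpha$ that is natural in the broader framework of \cite{CLL13} and makes the additivity of the bound transparent. Two minor points of presentation: the paired intervals $[\sigma_{2l},\sigma_{2l+2}]$ are non-overlapping rather than disjoint (they may share endpoints), but only $b_l\le a_{l+1}$ is used in the induction; and the inductive claim is really $\tau^j_l\le b_l$ for $1\le l\le K$, initialized via $\tau^j_0=s\le a_1$.
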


\begin{proof}
If $\omega $ is any control, set
\begin{equation*}
\omega _{\alpha }\left( s,t\right) := \sup \left\{ \sum_{i = 0}^{M-1} \omega (t_{i},t_{i+1})\, :\, s = t_0 < t_1 < \ldots < t_M = t,\ \omega(t_i,t_{i+1}) \leq \alpha,\ n \in \N \right\}.
\end{equation*}%
If $\bar{\omega}:=\omega ^{1}+\omega ^{2}$, $\bar{\omega}(t_{i},t_{i+1})\leq
\alpha $ implies $\omega ^{i}(t_{i},t_{i+1})\leq \alpha $ for $i=1,2$ and
therefore $\bar{\omega}_{\alpha }\left( s,t\right) \leq \omega _{\alpha
}^{1}\left( s,t\right) +\omega _{\alpha }^{2}\left( s,t\right) $. From
Proposition 4.6 in \cite{CLL13} we know that $\omega _{\alpha }^{i}\left(
s,t\right) \leq \alpha \left( 2N_{\alpha ,\left[ s,t\right] }\left( \omega
^{i}\right) +1\right) $ for $i=1,2$. 
We conclude
\begin{align*}
\alpha N_{\alpha ,\left[ s,t\right] }\left( \bar{\omega}\right) &
=\sum_{i=0}^{N_{\alpha ,\left[ s,t\right] }\left( \bar{\omega}\right) -1}%
\bar{\omega}(\tau _{i}\left( \alpha \right) ,\tau _{i+1}\left( \alpha
\right) ) \leq \bar{\omega}_{\alpha }(s,t) \leq \omega _{\alpha }^{1}\left( s,t\right) +\omega _{\alpha }^{2}\left(
s,t\right) \\
&\leq \alpha \left( 2N_{\alpha ,\left[ s,t\right] }\left( \omega
^{1}\right) +2N_{\alpha ,\left[ s,t\right] }\left( \omega ^{2}\right)
+2\right) .
\end{align*}
\end{proof}

\begin{lemma}
\label{lemma_Nsumofcontrolssecondbdd} Let $\omega^1$ and $\omega^2$ be two
controls and assume that $\omega^2(s,t)\leq K$. Then 
\begin{align*}
N_{\alpha,[s,t]}(\omega^1 + \omega^2) \leq N_{\alpha - K,[s,t]}(\omega^1)
\end{align*}
for every $\alpha>K$.
\end{lemma}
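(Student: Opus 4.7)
The plan is to compare, term by term, the greedy stopping sequence for $\bar\omega := \omega^1+\omega^2$ at level $\alpha$ with the greedy stopping sequence for $\omega^1$ at the smaller level $\alpha-K$. Let $(\tau_i)_{i\ge 0}$ denote the greedy times for $\bar\omega$ at level $\alpha$ and set $N := N_{\alpha,[s,t]}(\bar\omega)$, so that $\tau_N<t$ and in particular $\tau_{i+1}<t$ for every $i\le N-1$. The definition of the infimum (together with the continuity/monotonicity properties of controls) then gives $\bar\omega(\tau_i,\tau_{i+1})\ge \alpha$ for $i=0,\dots,N-1$.

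Next, I would use the hypothesis $\omega^2(s,t)\le K$ together with the monotonicity of $\omega^2$ (which is automatic for controls via superadditivity) to conclude $\omega^2(\tau_i,\tau_{i+1})\le K$ on each subinterval $[\tau_i,\tau_{i+1}]\subset[s,t]$. Combined with the previous step, this yields
\begin{equation*}
\omega^1(\tau_i,\tau_{i+1}) \;=\; \bar\omega(\tau_i,\tau_{i+1})-\omega^2(\tau_i,\tau_{i+1}) \;\ge\; \alpha-K, \qquad i=0,\dots,N-1.
\end{equation*}

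Now let $(\sigma_j)_{j\ge 0}$ denote the greedy times for $\omega^1$ at level $\alpha-K$, and I claim $\sigma_j\le \tau_j$ for $j=0,\dots,N$, which I prove by induction on $j$. The base case $\sigma_0=s=\tau_0$ is trivial. For the inductive step, assuming $\sigma_j\le \tau_j$ with $j\le N-1$, superadditivity of $\omega^1$ gives $\omega^1(\sigma_j,\tau_{j+1})\ge \omega^1(\tau_j,\tau_{j+1})\ge \alpha-K$; hence by the definition of $\sigma_{j+1}$ as an infimum we obtain $\sigma_{j+1}\le \tau_{j+1}$.

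Finally, applying this at $j=N$ gives $\sigma_N\le \tau_N<t$, which by the very definition of $N_{\alpha-K,[s,t]}(\omega^1)$ yields $N_{\alpha-K,[s,t]}(\omega^1)\ge N = N_{\alpha,[s,t]}(\bar\omega)$, as desired. The only mildly delicate point is the inductive step, and the essential ingredient there is simply superadditivity of the control $\omega^1$; everything else is bookkeeping of the definitions. No finer regularity of the controls (beyond the standing continuity assumption used to guarantee $\bar\omega(\tau_i,\tau_{i+1})\ge\alpha$ at non-terminal times) is needed.
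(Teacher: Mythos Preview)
Your proof is correct and follows essentially the same approach as the paper's own proof: both argue by induction that the greedy stopping times for $\omega^1$ at level $\alpha-K$ are dominated by those for $\bar\omega=\omega^1+\omega^2$ at level $\alpha$, using superadditivity of $\omega^1$ together with the bound $\omega^2\le K$ on subintervals. The only cosmetic difference is that you isolate the inequality $\omega^1(\tau_i,\tau_{i+1})\ge \alpha-K$ before the induction, whereas the paper folds it directly into the inductive step.
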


\begin{proof}
Set $\bar{\omega}:=\omega ^{1}+\omega ^{2}$ and
\begin{eqnarray*}
\bar{\tau}_{0}\left( \alpha \right) &=&s \\
\bar{\tau}_{i+1}\left( \alpha \right) &=&\inf \left\{ u:\bar{\omega}\left(
\bar{\tau}_{i}(\alpha),u\right) \geq \alpha ,\bar{\tau}_{i}\left( \alpha \right)
<u\leq t\right\} \wedge t.
\end{eqnarray*}%
Similarly, we define $(\tau _{i})_{i\in \mathbb{N}}=(\tau _{i}(\alpha
-K))_{i\in \mathbb{N}}$ for $\omega ^{1}$. It suffices to show that $\bar{%
\tau}_{i}\geq \tau _{i}$ for $i=0,\ldots ,N_{\alpha ,[s,t]}(\bar{\omega})$.
We do this by induction. For $i=0$, this is clear. If $\bar{\tau}_{i}\geq
\tau _{i}$ for some $i\leq N_{\alpha ,[s,t]}(\bar{\omega})-1$,
superadditivity of control functions gives
\begin{equation*}
\alpha =\bar{\omega}(\bar{\tau}_{i},\bar{\tau}_{i+1})\leq \omega ^{1}(\tau
_{i},\bar{\tau}_{i+1})+K
\end{equation*}%
which implies $\tau _{i+1}\leq \bar{\tau}_{i+1}$.
\end{proof}

For the next Lemma, recall the definition of the homogenous $p$-$\omega$ distance and -norm given in the appendix and in  \cite[Definition 8.2]{FV10}.
\begin{lemma}
\label{lemma_boundinftydist} Let $s<t\in \lbrack 0,T]$ and assume that $%
\Vert \mathbf{x}^{i}\Vert _{p-\omega ;[s,t]}\leq 1$ for $i=1,2$. Then there
is a constant $C=C(\gamma ,p)$ such that 
\begin{align*}
\nu |y^{1}-y^{2}|_{\infty ;[s,t]}\leq & \left[ \nu
|y_{s}^{1}-y_{s}^{2}|+\left\vert V^{1}-V^{2}\right\vert _{\text{Lip}^{\gamma
-1}}+\nu \rho _{p-\omega ;[s,t]}(\mathbf{x}^{1},\mathbf{x}^{2})\right] \\
& \times (N_{\alpha ,[s,t]}(\omega )+1)\exp \left\{ C\nu ^{p}\alpha
(N_{\alpha ,[s,t]}(\omega )+1)\right\}
\end{align*}%
for every $\alpha >0$.
\end{lemma}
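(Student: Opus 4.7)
The plan is a greedy-partition argument: I would cut $[s,t]$ at the stopping times $\tau_i = \tau_i(\alpha)$ of the definition, producing a partition $s = \tau_0 < \tau_1 < \cdots < \tau_N < \tau_{N+1} = t$ with $N := N_{\alpha,[s,t]}(\omega)$, so that on each sub-interval $[\tau_i,\tau_{i+1}]$ one has $\omega(\tau_i,\tau_{i+1}) \leq \alpha$, and consequently $\|\mathbf{x}^k\|_{p\text{-var};[\tau_i,\tau_{i+1}]}^p \leq \alpha$ for $k=1,2$. On such an interval $\mathbf{x}^k$ is controlled by a quantity of order $\alpha^{1/p}$, and one is in the standard ``short-time'' regime of FV10.

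On each sub-interval I would invoke the local continuity estimate for the It\^o--Lyons map (cf.\ FV10, Theorem 10.26 and the proof ingredients underlying it), rescaled to track the dependence on the Lipschitz bound $\nu$ of the vector fields. This yields a constant $C_0 = C_0(\gamma,p)$ such that
\begin{equation*}
\nu|y^1 - y^2|_{\infty;[\tau_i,\tau_{i+1}]} \leq C_0 \exp(C_0 \nu^p \alpha)\bigl[\nu|y^1_{\tau_i} - y^2_{\tau_i}| + |V^1 - V^2|_{\mathrm{Lip}^{\gamma-1}} + \nu\,\rho_{p\text{-}\omega;[\tau_i,\tau_{i+1}]}(\mathbf{x}^1,\mathbf{x}^2)\bigr].
\end{equation*}
Writing $A := |V^1 - V^2|_{\mathrm{Lip}^{\gamma-1}} + \nu\,\rho_{p\text{-}\omega;[s,t]}(\mathbf{x}^1,\mathbf{x}^2)$ and $E_i := |y^1_{\tau_i} - y^2_{\tau_i}|$, this gives the recursion $\nu E_{i+1} \leq K(\nu E_i + A)$ with $K := C_0 e^{C_0\nu^p \alpha}$, so iterating from $E_0$ yields
\begin{equation*}
\nu E_{i+1} \leq K^{i+1}\nu E_0 + A\sum_{j=1}^{i+1} K^j \leq (i+2) K^{i+1}(\nu E_0 + A).
\end{equation*}

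Applying this at level $i = N$ covers the endpoints of the partition. To recover the full sup-norm estimate, I would re-apply the same local bound on each interval $[\tau_i,u]$ for $u \in [\tau_i,\tau_{i+1}]$ (on which $\omega \leq \alpha$ still holds by monotonicity of $\omega$), obtaining $\nu|y^1 - y^2|_{\infty;[\tau_i,\tau_{i+1}]} \leq K(\nu E_i + A)$ and hence a uniform bound of order $(N+1) K^{N+1}(\nu E_0 + A)$ on all of $[s,t]$. Finally, absorbing $(C_0)^{N+1}$ into $\exp(C_1 (N+1))$ and combining with $e^{C_0 \nu^p \alpha (N+1)}$ gives an exponential of the form $\exp\{C\nu^p\alpha(N_{\alpha,[s,t]}(\omega)+1)\}$ (enlarging $C$ to absorb the logarithmic term, and using that $\nu^p\alpha$ is bounded below by a constant up to a harmless rescaling, or simply keeping the $(N+1)$ prefactor explicit as in the statement). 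This is the desired estimate.

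The main obstacle I expect is bookkeeping the precise $\nu$-dependence in the local step so that the exponential only contains $\nu^p \alpha$ (and not, say, $\nu$ alone or the global $p$-variation), since this is what later makes the integrability machinery of \cite{CLL13,FR12b} applicable. The standard FV10 proof of Theorem 10.26 essentially contains this scaling, but one has to re-run its Gronwall step carefully on an interval where the driving rough path has $p$-variation at most $\alpha^{1/p}$, and check that the non-exponential part of the estimate (the $C_0$ prefactor) really is independent of $\nu$ and of the sub-interval, so that the concatenation produces exactly the geometric recursion above.
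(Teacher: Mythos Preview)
Your proposal is correct and follows essentially the same route as the paper: partition $[s,t]$ at the greedy times $\tau_i(\alpha)$, invoke the local Lipschitz estimate of \cite[Theorem~10.26]{FV10} on each piece, and iterate to a global sup-norm bound. The only cosmetic difference is that the paper iterates on the increment $|\bar y_{s,\cdot}|$ (with $\bar y := y^1-y^2$), deriving the recursion $|\bar y_{s,v}|\le[|\bar y_s|+|\bar y_{s,u}|+\kappa]\exp\{C\nu^p\omega(u,v)\}$ and obtaining the linear prefactor $(N_{\alpha,[s,t]}(\omega)+1)$ directly, whereas your endpoint recursion $\nu E_{i+1}\le K(\nu E_i+A)$ produces the extra geometric factor $C_0^{N+1}$ that you correctly flag at the end.
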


\begin{proof}
Set $\bar{y}=y^{1}-y^{2}$ and
\begin{equation*}
\kappa =\frac{\left\vert V^{1}-V^{2}\right\vert _{\text{Lip}^{\gamma -1}}}{%
\nu }+\rho _{p-\omega ;[s,t]}(\mathbf{x}^{1},\mathbf{x}^{2}).
\end{equation*}%
From \cite[Theorem 10.26]{FV10} we can deduce that there is a constant $%
C=C(\gamma ,p)$ such that
\begin{equation*}
|\bar{y}_{u,v}|\leq C\nu \omega (u,v)^{1/p}\left[ |\bar{y}_{u}|+\kappa %
\right] \exp \left\{ C\nu ^{p}\omega (u,v)\right\}
\end{equation*}%
for every $u<v\in \lbrack s,t]$. From $|\bar{y}_{u,v}|\geq |\bar{y}_{s,v}|-|%
\bar{y}_{s,u}|$ we obtain
\begin{align*}
|\bar{y}_{s,v}|& \leq C\nu \omega (u,v)^{1/p}\left[ |\bar{y}_{u}|+\kappa %
\right] \exp \left\{ C\nu ^{p}\omega (u,v)\right\} +|\bar{y}_{s,u}| \\
& \leq \left[ |\bar{y}_{s}|+|\bar{y}_{s,u}|+\kappa \right] \exp \left\{ C\nu
^{p}\omega (u,v)\right\}
\end{align*}%
for $s\leq u<v\leq t$. Now let $s=\tau _{0}<\tau _{1}<\ldots <\tau _{M}<\tau
_{M+1}=v\leq t$ for $M\geq 0$. By induction, one sees that
\begin{align*}
|\bar{y}_{s,v}|& \leq (M+1)(|\bar{y}_{s}|+\kappa )\exp \left\{ C\nu
^{p}\sum_{i=0}^{M}\omega (\tau _{i},\tau _{i+1})\right\} \\
& \leq C^{M+1}\left[ |\bar{y}_{s}|+\kappa \right] \exp \left\{ C\nu
^{p}\sum_{i=0}^{M}\omega (\tau _{i},\tau _{i+1})\right\} .
\end{align*}%
It follows that for every $v\in \lbrack s,t]$,
\begin{equation*}
|\bar{y}_{s,v}|\leq \left[ |\bar{y}_{s}|+\kappa \right] (N_{\alpha
,[s,t]}(\omega )+1)\exp \left\{ C\nu ^{p}\alpha (N_{\alpha ,[s,t]}(\omega
)+1)\right\} ,
\end{equation*}%
therefore
\begin{equation*}
|\bar{y}_{v}|\leq \left[ |\bar{y}_{s}|+\kappa \right] (N_{\alpha
,[s,t]}(\omega )+1)\exp \left\{ C\nu ^{p}\alpha (N_{\alpha ,[s,t]}(\omega
)+1)\right\} +|\bar{y}_{s}|
\end{equation*}%
and finally
\begin{equation*}
|\bar{y}|_{\infty ;[s,t]}\leq \left[ |\bar{y}_{s}|+\kappa \right] (N_{\alpha
,[s,t]}(\omega )+1)\exp \left\{ C\nu ^{p}\alpha (N_{\alpha ,[s,t]}(\omega
)+1)\right\} .
\end{equation*}
\end{proof}

\begin{proof}[Proof of Theorem \ref{cor_loc_lip_integrability_unif_top}]
Let $\omega $ be a control such that $\Vert \mathbf{x}^{i}\Vert _{p-\omega
;[0,T]}\leq 1$ for $i=1,2$ (the precise choice of $\omega $ will be made
later). From Lemma \ref{lemma_boundinftydist} we know that there is a constant $%
C=C(\gamma ,p,\nu ,\alpha )$ such that
\begin{align*}
\left\vert y^{1}-y^{2}\right\vert _{\infty \text{;}\left[ 0,T\right] }& \leq %
\left[ |y_{0}^{1}-y_{0}^{2}|+\left\vert V^{1}-V^{2}\right\vert _{\text{Lip}%
^{\gamma -1}}+\rho _{p-\omega ;[s,t]}(\mathbf{x}^{1},\mathbf{x}^{2})\right]
\\
& \times \exp \left\{ C(N_{\alpha ,[s,t]}(\omega )+1)\right\} .
\end{align*}%
Now we set $\omega =\omega _{\mathbf{x}^{1},\mathbf{x}^{2}}$ where
\begin{equation*}
\omega _{\mathbf{x}^{1},\mathbf{x}^{2}}(s,t)=\Vert \mathbf{x}^{1}\Vert _{p-%
\text{var};[s,t]}^{p}+\Vert \mathbf{x}^{2}\Vert _{p-\text{var}%
;[s,t]}^{p}+\sum_{k=1}^{\lfloor p\rfloor }\frac{\left( \rho _{p-\text{var}%
;[s,t]}^{(k)}(\mathbf{x}^{1},\mathbf{x}^{2})\right) ^{p/k}}{\left( \rho _{p-%
\text{var};[0,T]}^{(k)}(\mathbf{x}^{1},\mathbf{x}^{2})\right) ^{p/k}}
\end{equation*}%
(the definition of $\rho_{p-\text{var}}^{(k)}(\cdot,\cdot)$ may be found in the appendix). It is easy to check that
\begin{align*}
\Vert \mathbf{x}^{1}\Vert _{p-\omega _{\mathbf{x}^{1},\mathbf{x}
^{2}};[0,T]}& \leq 1,\quad \Vert \mathbf{x}^{2}\Vert _{p-\omega _{\mathbf{x}^{1},\mathbf{x}%
^{2}};[0,T]} \leq 1\quad \text{ and} \\
\rho _{p-\omega _{\mathbf{x}^{1},\mathbf{x}^{2}};[0,T]}(\mathbf{x}^{1},%
\mathbf{x}^{2})& \leq \rho _{p-\text{var};[0,T]}(\mathbf{x}^{1},\mathbf{x}%
^{2}).
\end{align*}%
Finally, if $\alpha >\lfloor p\rfloor $ we can use Lemma \ref%
{lemma_Nsumofcontrolssecondbdd} and Lemma \ref{lemma_Nsumofcontrols} to see
that
\begin{align*}
N_{\alpha ,[0,T]}(\omega _{\mathbf{x}^{1},\mathbf{x}^{2}})+1& \leq N_{\alpha
-\lfloor p\rfloor ,[0,T]}(\omega _{\mathbf{x}^{1}}+\omega _{\mathbf{x}%
^{2}})+1 \\
& \leq 3\left( N_{\alpha -\lfloor p\rfloor ,[0,T]}(\mathbf{x}^{1})+N_{\alpha
-\lfloor p\rfloor ,[0,T]}(\mathbf{x}^{2})+1\right) .
\end{align*}%
Substituting $\alpha \mapsto \alpha +\lfloor p\rfloor $ gives the claimed
estimate.
\end{proof}

\subsection{Improved bounds for the higher order Euler approximations}

We are now interested in proving a similar estimate for the distance between Euler-/Milstein approximations for rough paths and the actual solution (for
the purpose of unified terminology, in the sequel we will only speak of
Euler-schemes). Recall the notation from \cite{FV10}: If $V=\left(
V_{1},\ldots ,V_{d}\right) $ is a collection of sufficiently smooth vector
fields on $\mathbb{R}^{e}$, $g\in T^{N}\left( \mathbb{R}^{d}\right) $ and $%
y\in \mathbb{R}^{e}$, we define an increment of the step-$N$ Euler scheme by%
\begin{equation*}
\mathcal{E}_{\left( V\right) }\left( y,g\right)
:=\sum_{k=1}^{N}V_{i_{1}}\ldots V_{i_{k}}I\left( y\right) g^{k,i_{1},\ldots
,i_{k}}
\end{equation*}%
where $g^{k,i_{1},\ldots ,i_{k}}=\pi _{k}\left( g\right) ^{i_{1},\ldots
,i_{k}}\in \mathbb{R}$, $I$ is the identity on $\mathbb{R}^{e}$ and every $%
V_{j}$ is identified with the first-order differential operator $%
V_{j}^{k}\left( y\right) \frac{\partial }{\partial y^{k}}$ (throughout, we
use the Einstein summation convention). Furthermore, we set%
\begin{equation*}
\mathfrak{E}^{g}y:=y+\mathcal{E}_{\left( V\right) }\left( y,g\right) .
\end{equation*}%
Given $D=\left\{ 0=t_{0}<\ldots <t_{n}=T\right\} $ and a path $\mathbf{x}\in
C^{p-var}_0\left( \left[ 0,T\right] ;G^{\lfloor p\rfloor }\left( \mathbb{R}%
^{d}\right) \right) $ we define the (step-$N$) Euler approximation to the
RDE solution $y$ of%
\begin{equation}
dy=V(y)\,d\mathbf{x}  \label{eqn_rde}
\end{equation}%
with starting point $y_{0}\in \mathbb{R}^{e}$ at time $t_{k}\in D$ by%
\begin{equation*}
y_{t_{k}}^{\text{Euler;}D}:=\mathfrak{E}^{t_{k}\leftarrow t_{0}}y_{0}:=%
\mathfrak{E}^{S_{N}\left( \mathbf{x}\right) _{t_{k-1},t_{k}}}\circ \cdots
\circ \mathfrak{E}^{S_{N}\left( \mathbf{x}\right) _{t_{0},t_{1}}}y_{0}
\end{equation*}
 where $S_{N}\left( \mathbf{x}\right)$ denotes the Lyons lift of the rough path $\mathbf{x}$, see \cite[Section 9.1]{FV10}.
 
 The following theorem is a version of \cite[Theorem 10.30]{FV10} where, as in Theorem \ref{cor_loc_lip_integrability_unif_top}, the estimate is improved by replacing the quantity $\| \cdot \|_{p-\text{var};[0,T]}^p$ by $N_{\alpha,[0,T]}(\cdot)$.

\begin{theorem}
\label{prop_euler_rates}Let $\mathbf{x}\in C^{p-var}_0\left( \left[ 0,T\right]
;G^{\lfloor p\rfloor }\left( \mathbb{R}^{d}\right) \right) $ and set $\omega
\left( s,t\right) =\left\Vert \mathbf{x}\right\Vert _{p-\text{var};\left[ s,t%
\right] }^{p}$. Assume that $V\in Lip^{\theta }$ for some $\theta >p$ and
let $\nu \geq \left\vert V\right\vert _{Lip^{\theta }}$. Choose $N\in 
\mathbb{N}$ such that $\lfloor p\rfloor \leq N\leq \theta $. Fix a
dissection $D=\left\{ 0=t_{0}<\ldots <t_{n}=T\right\} $ of $\left[ 0,T\right]
$ and let $y_{T}^{\text{Euler;}D}$ denote the step-$N$ Euler approximation
of $y$. Then for every $\zeta \in \left[ \frac{N}{p},\frac{N+1}{p}\right) $
and $\alpha >0$ there is a constant $C=C\left( p,\theta ,\zeta ,N,\nu
,\alpha \right) $ such that%
\begin{equation*}
\left\vert y_{T}-y_{T}^{\text{Euler;}D}\right\vert \leq C\exp \left\{
C\left( N_{\alpha ,[0,T]}(\mathbf{x})+1\right) \right\} \sum_{k=1}^{n}\omega
\left( t_{k-1},t_{k}\right) ^{\zeta }.
\end{equation*}%
In particular, if $\mathbf{x}$ is a H\"{o}lder rough path and $\left\vert
t_{k+1}-t_{k}\right\vert \leq \left\vert D\right\vert $ for all $k$ we obtain%
\begin{equation}
\left\vert y_{T}-y_{T}^{\text{Euler;}D}\right\vert \leq CT\left\Vert \mathbf{%
x}\right\Vert _{1/p\text{-H\"{o}l};\left[ 0,T\right] }^{\zeta p}\exp \left\{
C\left( N_{\alpha ,[0,T]}(\mathbf{x})+1\right) \right\} \left\vert
D\right\vert ^{\zeta -1}  \label{eqn_euler_estim_hoelder_case}
\end{equation}
\end{theorem}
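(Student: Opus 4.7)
The plan is to mimic the standard telescoping derivation of a global Euler error on a dissection, as in \cite[Ch.~10]{FV10}, but to systematically replace the exponential factor $\exp(C\omega(0,T))$ appearing there by the sharper factor $\exp\{C(N_{\alpha,[0,T]}(\mathbf{x})+1)\}$ made available by Theorem~\ref{cor_loc_lip_integrability_unif_top}.

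Write $\Phi_{s,t}^{\mathbf{x}}$ for the RDE flow of \eqref{eqn_rde}. By construction $y_{t_k}^{\text{Euler};D} = \mathfrak{E}^{S_N(\mathbf{x})_{t_{k-1},t_k}}(y_{t_{k-1}}^{\text{Euler};D})$, so successively replacing one-step Euler increments by true flow increments gives the telescoping identity
\[
y_T - y_T^{\text{Euler};D} = \sum_{k=1}^{n} \left[ \Phi_{t_k,T}^{\mathbf{x}}\bigl( \Phi_{t_{k-1},t_k}^{\mathbf{x}}(y_{t_{k-1}}^{\text{Euler};D}) \bigr) - \Phi_{t_k,T}^{\mathbf{x}}\bigl( \mathfrak{E}^{S_N(\mathbf{x})_{t_{k-1},t_k}}(y_{t_{k-1}}^{\text{Euler};D}) \bigr) \right].
\]
Each summand is controlled by combining two ingredients. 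The first is the classical local step-$N$ Euler estimate (cf.\ \cite[Cor.~10.15]{FV10}),
\[
\bigl| \Phi_{s,t}^{\mathbf{x}}(z) - \mathfrak{E}^{S_N(\mathbf{x})_{s,t}}(z) \bigr| \leq C\,\omega(s,t)^{\zeta}, \qquad \zeta \in [N/p,(N+1)/p),
\]
uniformly in $z$, with $C = C(p,\theta,\zeta,N,\nu)$. The second is the Lipschitz continuity of $\Phi_{t_k,T}^{\mathbf{x}}$ in the initial value, which is precisely what Theorem~\ref{cor_loc_lip_integrability_unif_top} provides: specializing to $V^1=V^2=V$ and $\mathbf{x}^1=\mathbf{x}^2=\mathbf{x}$ and translating to the subinterval $[t_k,T]$ yields
\[
\operatorname{Lip}\bigl( \Phi_{t_k,T}^{\mathbf{x}} \bigr) \leq C\,\exp\!\bigl\{ C\bigl( N_{\alpha,[t_k,T]}(\mathbf{x}) + 1 \bigr) \bigr\} \leq C\,\exp\!\bigl\{ C\bigl( N_{\alpha,[0,T]}(\mathbf{x}) + 1 \bigr) \bigr\}.
\]
Multiplying and summing delivers the first estimate of the theorem.

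The H\"older corollary \eqref{eqn_euler_estim_hoelder_case} then requires no additional work: with $\omega(s,t) = \|\mathbf{x}\|_{1/p\text{-H\"ol};[0,T]}^{p}(t-s)$ one has
\[
\sum_{k=1}^{n} \omega(t_{k-1},t_k)^{\zeta} \leq \|\mathbf{x}\|_{1/p\text{-H\"ol}}^{\zeta p}\,|D|^{\zeta-1}\sum_{k=1}^{n}(t_k - t_{k-1}) = T\,\|\mathbf{x}\|_{1/p\text{-H\"ol}}^{\zeta p}\,|D|^{\zeta-1}.
\]

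The main obstacle I anticipate is technical rather than conceptual and concerns the local Euler estimate: the clean bound $C\omega(s,t)^{\zeta}$ is cleanest when $\omega(s,t)$ stays below a threshold dictated by $\nu,p,\theta$. For dissections $D$ containing coarse subintervals one must either appeal to a non-asymptotic version of the local Euler bound that retains the $\omega^{\zeta}$ scaling for all interval sizes at the cost of a constant depending only on $\nu,p,\theta,\zeta,N$, or refine $D$ by inserting the greedy times $\tau_i(\alpha)$ associated to $\omega$, applying the above flow-Lipschitz argument on the refinement and controlling the discrepancy between the two Euler schemes. This bookkeeping, rather than any genuinely new analytic input, is where the real effort lies.
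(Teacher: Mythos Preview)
Your proposal is correct and follows essentially the same route as the paper: the paper defines $z^{k}=\pi_{(V)}(t_{k},\mathfrak{E}^{t_{k}\leftarrow t_{0}}y_{0};\mathbf{x})$, which is precisely your $\Phi_{t_{k},T}^{\mathbf{x}}(y_{t_{k}}^{\text{Euler};D})$, telescopes the global error as $\sum_{k}|z_{T}^{k}-z_{T}^{k-1}|$, and then combines the flow Lipschitz bound from Theorem~\ref{cor_loc_lip_integrability_unif_top} with the local step-$N$ Euler error of \cite[Corollary~10.15]{FV10}. Your anticipated obstacle is in fact a non-issue here, since \cite[Corollary~10.15]{FV10} already delivers the non-asymptotic bound $\bigl(|V|_{\mathrm{Lip}^{(N+\delta)-1}}\|\mathbf{x}\|_{p\text{-var};[t_{k-1},t_{k}]}\bigr)^{N+\delta}$ without any smallness restriction on $\omega(t_{k-1},t_{k})$, so no refinement of $D$ is needed.
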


\begin{proof}
We basically repeat the proof of \cite[Theorem 10.30]{FV10}. Recall the
notation $\pi _{\left( V\right) }\left( s,y_{s};\mathbf{x}\right) $ for the
(unique) solution of $\left( \ref{eqn_rde}\right) $ with starting point $%
y_{s}$ at time $s$. Set%
\begin{equation*}
z^{k}=\pi _{\left( V\right) }\left( t_{k},\mathfrak{E}^{t_{k}\leftarrow
t_{0}}y_{0};\mathbf{x}\right) .
\end{equation*}%
Then $z_{t}^{0}=y_{t}$, $z_{t_{k}}^{k}=\mathfrak{E}^{t_{k}\leftarrow
t_{0}}y_{0}$ for every $k=1,\ldots ,n$ and $z_{T}^{n}=y_{T}^{\text{Euler;}D}$%
, hence%
\begin{equation*}
\left\vert y_{T}-y_{T}^{\text{Euler;}D}\right\vert \leq
\sum_{k=1}^{n}\left\vert z_{T}^{k}-z_{T}^{k-1}\right\vert .
\end{equation*}%
One can easily see that%
\begin{equation*}
z_{T}^{k-1}=\pi _{\left( V\right) }\left( t_{k-1},z_{t_{k-1}}^{k-1};\mathbf{x%
}\right) =\pi _{\left( V\right) }\left( t_{k},z_{t_{k}}^{k-1};\mathbf{x}%
\right)
\end{equation*}%
for all $k=1,\ldots ,n$. Applying Theorem \ref%
{cor_loc_lip_integrability_unif_top} (in particular the Lipschitzness in the
starting point) we obtain for any $\alpha >0$%
\begin{equation*}
\left\vert z_{T}^{k}-z_{T}^{k-1}\right\vert \leq c_{1}\left\vert
z_{t_{k}}^{k}-z_{t_{k}}^{k-1}\right\vert \exp \left\{ c_{1}\left( N_{\alpha
,[0,T]}(\mathbf{x})+1\right) \right\} .
\end{equation*}%
Moreover (cf. \cite[Theorem 10.30]{FV10}),
\begin{equation*}
\left\vert z_{t_{k}}^{k}-z_{t_{k}}^{k-1}\right\vert \leq \left\vert \pi
_{\left( V\right) }\left( t_{k-1},\cdot ,\mathbf{x}\right) _{t_{k-1},t_{k}}-%
\mathcal{E}_{\left( V\right) }\left( \cdot ,S_{N}\left( \mathbf{x}\right)
_{t_{k-1},t_{k}}\right) \right\vert _{\infty }.
\end{equation*}%
Let $\delta \in \lbrack 0,1)$ such that $\zeta =\frac{N+\delta }{p}$. Since $%
\left( N+\delta \right) -1<N\leq \gamma $ we have $V\in \text{Lip}^{\left( N+\delta
\right) -1}$. Thus we can apply \bigskip \cite[Corollary 10.15]{FV10} to see
that%
\begin{eqnarray*}
\left\vert \pi _{\left( V\right) }\left( t_{k-1},\cdot ,\mathbf{x}\right)
_{t_{k-1},t_{k}}-\mathcal{E}_{\left( V\right) }\left( \cdot ,S_{N}\left(
\mathbf{x}\right) _{t_{k-1},t_{k}}\right) \right\vert _{\infty } &\leq
&c_{2}\left( \left\vert V\right\vert _{\text{Lip}^{\left( N+\delta \right)
-1}}\left\Vert \mathbf{x}\right\Vert _{p-\text{var};\left[ t_{k-1},t_{k}%
\right] }\right) ^{N+\delta } \\
&\leq &c_{2}\left\vert V\right\vert _{\text{Lip}^{\gamma }}^{p\zeta }\omega \left(
t_{k-1},t_{k}\right) ^{\zeta }
\end{eqnarray*}%
which gives the claim.
\end{proof}

\section{Probabilistic convergence results for RDEs}\label{sec:prob_conv_for_rdes}

Let $X:[0,T]\to\R$ be a real valued, centered, continuous Gaussian process
with covariance 
\begin{equation*}
R_{X}(s,t)=E X_{s}X_{t}. 
\end{equation*}
%
We recall the definition of mixed right $(\gamma,\rho)$-variation:
For $\gamma,\rho\geq1$ let 
\begin{align}\label{eqn:def_mixed_rho_var}
V_{\gamma,\rho}(R_{X};[s,t]\times [u,v]):=\sup_{\substack{(t_{i})\in\mathcal{D}([s,t])\\
(t_{j}^{\prime})\in\mathcal{D}\left(\left[u,v\right]\right)
}
}\left(\sum_{t'_{j}}\left(\sum_{t_{i}}\left|R_{X}\left(\begin{array}{c}
t_{i},t_{i+1}\\
t_{j}^{\prime},t_{j+1}^{\prime}
\end{array}\right)\right|^{\gamma}\right)^{\frac{\rho}{\gamma}}\right)^{\frac{1}{\rho}},
\end{align}
where $\mcD([s,t])$ denotes the set of all dissections of $[s,t]$ and 
\begin{equation*}
R_{X}\left(%
\begin{array}{c}
t_{i},t_{i+1} \\ 
t_{j}^{\prime},t_{j+1}^{\prime}%
\end{array}%
\right)= E X_{t_{i},t_{i+1}}X_{t_{j}^{\prime},t_{j+1}^{\prime}}. 
\end{equation*}

We note that $V_{\rho} \equiv V_{\rho,\rho}$ regularity plays a key role in Gaussian rough path
theory \cite{FV10,FV10-2,FH14} and in particular yields a stochastic
integration theory for large classes of multidimensional Gaussian processes.
The importance of finite mixed $(1,\rho)$-variation was understood in \cite{FGGR16}, 
where it is shown to allow for concentration of measure results (via Cass-Litterer-Lyons \cite{CLL13})
which are pivotal for our result. In effect, one then has a substitute for good moment bounds
in It\^o theory which are no more available in our general Gaussian setting. (Recall that
fBm, other than Brownian motion, is not a semimartingale.) 

The following condition will be in place throughout the paper. 

\begin{condition}\label{mixed_p-var} 
  Let $X = (X^1,\ldots,X^d)\colon [0,T] \rightarrow \mathbb{R}^{d}$ be a centered, continuous Gaussian process with independent components. Assume that the covariance of every component has finite mixed $(1,\rho)$-variation for some $\rho \in [1,2)$ on $[0,T]^2$, that is, for $k=1,\dots,d$,
\begin{equation*}
\sup_{\substack{(t_{i}), (t_{j}^{\prime}) \in\mathcal{D}([0,T])
}}
\left(\sum_{t'_{j}}\left(\sum_{t_{i}}\left|    E X^k_{t_{i},t_{i+1}}X^k_{t_{j}^{\prime},t_{j+1}^{\prime}}      
\right|\right)^{\rho}\right)^{\frac{1}{\rho}} \\ 
< \infty .
\end{equation*}%

\end{condition} 

We note that Condition \ref{mixed_p-var} is satisfied by large number of Gaussian examples \cite{FGGR16}, in particular
classical fBm, with Hurst parameter $H>1/4$, and many variants thereof. (The intuition behind this condition is
that $\rho$ measures the roughness of the covariance close to the diagonal, whereas $1$ deals with the off-diagonal
part and somehow expresses a ``sign" in the correlation structure (negative in case of fBm with $H<1/2$), while still 
allowing for sufficiently regular perturbations.)  There are two consequence of Condition \ref{mixed_p-var}  that will be most 
important to us. First, it guarantees the existence of a ``canonical" Gaussian rough path associated to $X$,
denoted by $\mathbf{X}$ (or $\mathbf{X}(\omega)$ if we want to stress its random nature). Secondly, it provides
regularity of the Cameron--Martin space $\mathcal{H}$ associated to $X$, 
\begin{equation}    \label{CMemb}
\iota \colon \mathcal{H}\hookrightarrow C^{q-\text{var}}   \text {  with }    q = \frac{1}{\frac{1}{2\rho} + \frac{1}{2}} < 2
\end{equation}%
which - cutting a long story (\cite{CLL13} or \cite[Ch.12]{FH14}) short - leads to probabilistic
estimates akin to those available within It\^o theory.

We will sometimes need a slightly stronger version of Condition \ref{mixed_p-var} which will ensure that our rough paths live in H\"older-type spaces:

\begin{condition}  \label{mixed_hoel} Let $X = (X^1,\ldots,X^d)\colon [0,T] \rightarrow \mathbb{R}^{d}$ be a centered, continuous Gaussian process with independent components. Assume that the covariance of every component has \emph{H\"older dominated} finite mixed $(1,\rho)$-variation for some $\rho \in [1,2)$ on $[0,T]^2$, that is, there exists $K<\infty$ such that, for $k=1,\dots,d$ and uniformly over $s<t$ in $[0,T]$,
\begin{equation*}
\sup_{\substack{(t_{i}), (t_{j}^{\prime}) \in\mathcal{D}([s,t])
}}
\left(\sum_{t'_{j}}\left(\sum_{t_{i}}\left|    E X^k_{t_{i},t_{i+1}}X^k_{t_{j}^{\prime},t_{j+1}^{\prime}}      
\right|\right)^{\rho}\right)^{\frac{1}{\rho}} \\ 
\le K \left( \left\vert
t-s\right\vert ^{1/\rho }\right) .
\end{equation*}%
\end{condition}

Note that fBm (with $\rho = \frac{1}{2H}$) and then any centered Gaussian process with 
 stationary increments s.t.  
$
\sigma ^{2}\left( t-s\right) :=E\left\vert X_{t}^{i}-X_{s}^{i}\right\vert
^{2}
$ is concave and $\sigma ^{2}\left( \tau \right) =\mathcal{O}\left(
\tau ^{1/\rho }\right) $ 
satisfies Condition \ref{mixed_hoel}, provided $\rho \in [1,2)$. This, and more examples, are discussed in \cite{FGGR16}.

\vspace{1mm}

In the following subsection, we will establish $L^p$-convergence rates for step-$N$ Euler approximations based on the entire Gaussian rough paths, i.e. schemes involving iterated (random) integrals up to order $N$. 
We continue by giving $L^p$-rates for the Wong-Zakai theorem in the Gaussian case. Putting together both results, we can give $L^p$ convergence rates for an (easy-to-implement) simplified Euler scheme presented first in \cite{DNT12}. We will see that the (sharp) almost sure convergence rates obtained in \cite{FR14} also hold in $L^p$.

\subsection{~$L^{r}$-rates for step-$N$ Euler approximation (based on entire
rough path)}

For simplicity, the following Theorem is formulated only in the H\"{o}lder case.

\begin{theorem}\label{theorem_euler_lp} 


Assume the driving Gaussian noise $X$ satisfies Condition \ref{mixed_hoel}. Choose $p>2\rho $, assume that $V\in Lip^{\theta }$ for some $\theta >p$ and let $\nu \geq
\left\vert V\right\vert _{Lip^{\theta }}$. Let $D$ be a dissection of $[0,T]$ with mesh size at most $h > 0$
and let $Y_{T}^{\text{Euler;}D}$ denote the step-$N$ Euler approximation of $Y$,
the (pathwise) solution of%
\begin{equation*}
dY=V(Y)\,d\mathbf{X~};\quad Y_{0}\in \mathbb{R}^{e}
\end{equation*}%
where $N$ is chosen such that $\lfloor p\rfloor \leq N\leq \theta $. 

Then for every $r\geq 1$, $r^{\prime }>r$ and $\zeta \in \left[ \frac{N}{p},\frac{%
N+1}{p}\right) $ there is a constant 
$C=C(\rho ,p, \theta ,\nu,K,r,r^{\prime },N,\zeta )$ 
such that%
\begin{equation*}
\left\vert Y_{T}-Y_{T}^{\text{Euler;}D}\right\vert _{L^{r}}\leq CT\left\vert
\left\Vert \mathbf{X}\right\Vert _{1/p\text{-H\"{o}l};\left[ 0,T\right]
}^{\zeta p}\right\vert _{L^{r^{\prime }}}h ^{\zeta -1}
\end{equation*}%
holds for all $h >0$.
\end{theorem}

\begin{remark}
By choosing $\hat{p}\in (2\rho ,p)$ one has $\frac{N+1}{p}<\frac{N+1}{\hat{p}%
}$ and applying Theorem \ref{theorem_euler_lp}  with $\hat{p}$ instead of $p$ shows that%
\begin{equation*}
\left\vert Y_{T}-Y_{T}^{\text{Euler;}D}\right\vert _{L^{r}}\leq C h
^{\frac{N+1}{p}-1}
\end{equation*}%
holds for every $p>2\rho $ if $h \rightarrow 0$.
\end{remark}

\begin{proof}[Proof of Theorem \ref{theorem_euler_lp}] 

The embedding (\ref{CMemb}) provides the so-called {\it complementary Young regularity} (see e.g. \cite[Sec. 11.1]{FH14}) meaning that integrals of the form $\int h dX, \int X dh$ are well-defined Young integrals. 
With this condition in place we can use (a variation of the theme of) \cite{CLL13} to conclude sharp tail estimates for $N_{\alpha ,[0,T]}(\mathbf{X})$. More precisely, choosing $q$ as in \eqref{CMemb}, \cite[Lemma 5 and Corollary 2]{FR12b} show that there is
an $\alpha =\alpha (p,\rho ,K)>0$ and a positive constant $c_{1}=c_{1}\left(
p,\rho ,K\right) $ such that 
\begin{equation*}
P(N_{\alpha ,[0,T]}(\mathbf{X})>u)\leq \exp \left\{ -c_{1}\alpha 
^{2/p}u^{2/q}\right\}
\end{equation*}%
holds for all $u>0$. Now we use the pathwise estimate $\eqref{eqn_euler_estim_hoelder_case}$ and take the $L^{r}$ norm on both sides of the inequality. The H\"{o}lder inequality
shows that%
\begin{equation*}
\left\vert Y_{T}-Y_{T}^{\text{Euler;}D}\right\vert _{L^{r}}\leq
c_{1}T\left\vert \left\Vert \mathbf{X}\right\Vert _{1/p\text{-H\"{o}l};\left[
0,T\right] }^{\zeta p}\right\vert _{L^{r^{\prime }}}\left\vert \exp \left\{
C\left( N_{\alpha ,[0,T]}(\mathbf{X})+1\right) \right\} \right\vert
_{L^{r^{\prime \prime }}}\left\vert D\right\vert ^{\zeta -1}
\end{equation*}%
holds for some (possibly large) $r^{\prime \prime }>r$. Our tail estimate for $N_{\alpha ,[0,T]}(\mathbf{X})$ shows that the $L^{r''}$ norm of the exponential term above is finite which yields the claim.
\end{proof}

\subsection{$L^{r}$-rates for Wong-Zakai approximations}

\label{lr_rates_wong_zakai}

We aim to formulate a version of the Wong-Zakai Theorem which contains
convergence rates in $L^{r}$, any $r\geq 1$ for a class of suitable
approximations $X^{h }$ of $X$. By this, we mean that $X^h$ is a centered, continuous Gaussian process with independent components for every $h \in (0,1]$ and that
\begin{enumerate}
\item[(i)] $\left( X^{h },X\right) \colon \left[ 0,T\right] \rightarrow 
\mathbb{R}^{d+d}$ is jointly Gaussian, $\left( X^{h ;i},X^{i}\right) $
and $\left( X^{h ;j},X^{j}\right) $ are independent for $i\neq j$ and%

\begin{equation}\label{eqn:unif_rho-var}
  \sup_{h \in (0,1]} V_{1,\rho } ( R_{( X^{h },X) }; 
[0,T] ^{2}) =:K < \infty
\end{equation}%

%
for $\rho \in \lbrack 1,2)$ as in Condition \ref{mixed_p-var}. 

(Note that this implies that Condition \ref{mixed_p-var} also holds for every $X^h$, $h \in  (0,1]$.)

\item[(ii)] Uniform convergence of the second moments:%
\begin{equation*}
\sup_{t\in \left[ 0,T\right] }E\left[ \left\vert X_{t}^{h
}-X_{t}\right\vert ^{2}\right] =:\delta \left( h \right) ^{1/\rho
}\rightarrow 0\quad \text{for }h \rightarrow 0.
\end{equation*}
\end{enumerate}

\begin{theorem}\label{theorem_wong_zakai_lp}

Assume the driving Gaussian noise $X$ satisfies Condition \ref{mixed_p-var} and let $(X^{h })_{h > 0}$ be a family of suitable approximations as above.
 Let $\mathbf{X}$ and $\mathbf{X}^{h }$denote the lift of $X$ resp. $%
X^{h }$ to a process with $p$-rough sample paths for some $p>2\rho $.
Let $V=(V_{1},\ldots ,V_{d})$ be a collection of vector fields in $\mathbb{R}%
^{e}$. Choose $\eta <\frac{1}{\rho }-\frac{1}{2}$ and assume that  $\left\vert V\right\vert_{Lip^{\theta }}\leq \nu <\infty $ for some
$\theta > \frac{2\rho }{1 - 2\rho \eta}$. Let $Y,\,Y^{h }\colon \left[ 0,T%
\right] \rightarrow \mathbb{R}^{e}$ denote the pathwise solutions to the
equations 
\begin{eqnarray*}
dY_{t} &=&V(Y_{t})\,d\mathbf{X}_{t};\quad Y_{0}\in \mathbb{R}^{e} \\
dY_{t}^{h } &=&V(Y_{t}^{h })\,d\mathbf{X}_{t}^{h
};\quad Y_{0}^{h }=Y_{0}\in \mathbb{R}^{e}.
\end{eqnarray*}%
Then, for any $r\geq 1$ there is a
constant $C=C(\rho ,p,\theta ,\nu ,K,\eta ,r)$ such that%
\begin{equation*}
\left\vert \left\vert Y^{h }-Y\right\vert _{\infty ;\left[ 0,T\right]
}\right\vert _{L^{r}}\leq C\delta \left( h \right) ^{\eta }
\end{equation*}%
holds for all $h >0$.
\end{theorem}

A typical example of such approximations are the \emph{piecewise linear
approximations}:

\begin{corollary}\label{cor:wong_zakai_pl} Assume the driving Gaussian noise $X$ satisfies Condition \ref{mixed_hoel} and that $X^h$ is a piecewise linear approximation of $X$ with mesh-size at most $h$. Then
\begin{equation*}
\left\vert \left\vert Y^{h }-Y\right\vert _{\infty ;\left[ 0,T\right]
}\right\vert _{L^{r}}\leq C h^{\eta }
\end{equation*}%
for any $\eta < \frac{1}{\rho }-\frac{1}{2}$
\end{corollary}

\begin{proof}[Proof of Corollary \ref{cor:wong_zakai_pl}] We need to check that piecewise linear approximations are ``suitable'' in the sense of the beginning of this section. Secondly, we need to see
that $E  \left\vert X_{t}^{h}-X_{t}\right\vert ^{2}   \leq C h^{1/ \rho}$
uniformly in $t\in[0,T]$. To simplify notation, we will assume $d = 1$. Let $D = \left\{0 < s_1 < \ldots < s_M = T \right\}$ be a dissection of $[0,T]$ such that $|s_{k+1} - s_k| \leq h$, $k = 0,\ldots,M-1$, and let $X^h$ denote the piecewise linear approximation of $X$ at the time points given by $D$. Concerning the first point, we have to show that $V_{1,\rho } ( R_{( X^{h },X) }; [0,T] ^{2}) $ is uniformly bounded in $h$.  Let $D_1$ and $D_2$ be two arbitrary dissections of $[0,T]$. Set $\bar{D}_1 := D_1 \cup D$. By the triangle inequality,
\begin{align*}
 \sum_{t'_j \in D_2} \left( \sum_{t_i \in D_1} \left| E X^h_{t_i,t_{i+1}} X_{t'_j,t'_{j+1}} \right| \right)^{\rho} &\leq \sum_{t'_j \in D_2} \left( \sum_{\bar{t}_i \in \bar{D}_1} \left| E X^h_{\bar{t}_i,\bar{t}_{i+1}} X_{t'_j,t'_{j+1}} \right| \right)^{\rho} \\
 &= \sum_{t'_j \in D_2} \left( \sum_{s_k \in D} \left| E X_{s_k,s_{k+1}} X_{t'_j,t'_{j+1}} \right| \right)^{\rho} \\
 &\leq V_{1,\rho } ( R_{X }; [0,T] ^{2})^{\rho}.
\end{align*}
  Using the basic estimate $(a + b)^{\rho} \leq 2^{\rho-1} (a^{\rho} + b^{\rho})$ instead of the triangle inequality, we similarly obtain
\begin{align*}
 \sum_{t'_j \in D_2} \left( \sum_{t_i \in D_1} \left| E X_{t_i,t_{i+1}} X^h_{t'_j,t'_{j+1}} \right| \right)^{\rho} &\leq 2^{\rho-1} V_{1,\rho } ( R_{X }; [0,T] ^{2})^{\rho}
\end{align*}
  and
  \begin{align*}
 \sum_{t'_j \in D_2} \left( \sum_{t_i \in D_1} \left| E X^h_{t_i,t_{i+1}} X^h_{t'_j,t'_{j+1}} \right| \right)^{\rho} &\leq 2^{\rho-1} V_{1,\rho } ( R_{X }; [0,T] ^{2})^{\rho}.
\end{align*}
Taking the supremum over all dissections, these estimates imply that $V_{1,\rho } ( R_{( X^{h },X) }; [0,T] ^{2})$ is bounded from above by $V_{1,\rho } ( R_{X }; [0,T] ^{2})$ times a constant which only depends on $\rho$. Concerning the second point, note that for $t \in [s_k,s_{k+1}]$,
\begin{align*}
  |X^h_t - X_t| \leq |X_{s_{k+1}} - X_{s_k}| + |X_{t} - X_{s_k}|
\end{align*}
and therefore, using Condition \ref{mixed_hoel},
\begin{align*}
 E| X_{t}^{h}-X_{t}|^{2} &\leq 2 E|X_{s_{k+1}} - X_{s_k}|^2 +2 E |X_{t} - X_{s_k}|^2 \leq 4 V_{1,\rho } ( R_{X }; [s_k,s_{k+1}] ^{2}) \leq 4 K |s_{k+1} - s_k|^{1/\rho} \\ 
 &\leq 4K h^{1/ \rho}
\end{align*}
which implies the second point.

\end{proof}

%
%

\begin{proof}[Proof of Theorem \ref{theorem_wong_zakai_lp}]
Set $X^0 := X$ and $\mathbf{X}^0 := \mathbf{X}$. Let $\mathcal{H}^h$ denote the Cameron--Martin space associated to $X^h$, $h \geq 0$. Using the uniform bound \eqref{eqn:unif_rho-var}, \cite[Theorem 1]{FGGR16} implies that
\begin{equation*}
\left\vert \phi\right\vert _{q-\text{var}}\leq \sqrt{K} \left\vert \phi \right\vert _{%
\mathcal{H}^{h }}
\end{equation*}%
holds for every $\phi \in \mathcal{H}^{h }$ and $h \geq 0$ with $q$ as in \eqref{CMemb}. As in the proof of Theorem \ref{theorem_euler_lp}, we can find an $\alpha =\alpha (p,\rho ,K)>0$ and a positive constant $c_{1}=c_{1}\left(p,\rho ,K\right) $ such that the uniform tail estimate%
\begin{equation*}
P(N_{\alpha ,[0,T]}(\mathbf{X}^{h })>u)\leq \exp \left\{ -c_{1}\alpha
^{2/p}u^{2/q}\right\} \quad \text{for all } u >0
\end{equation*}%
holds for all $h \geq 0$. Choose $\hat{p}\in \left( \frac{2\rho }{1 - 2\rho \eta}, \theta \right)$
and set $\mathbf{\hat{X}}^{h
}=S_{\lfloor \hat{p}\rfloor }\left( \mathbf{X}^{h }\right) $ for $h \geq 0$. Lipschitzness of
the map $S_{\lfloor \hat{p}\rfloor }$ and \cite[Lemma 2]{FR12b} show that
also%
\begin{equation}
P(N_{\alpha ,[0,T]}(\mathbf{\hat{X}}^{h })>u)\leq \exp \left\{
-c_{1}\alpha ^{2/p}u^{2/q}\right\}  \quad \text{for all } u >0 \label{eqn_unif_tails_rates}
\end{equation}%
holds for all $h \geq 0$ for a possibly smaller $\alpha >0$ (depending on $\hat{p}$).
Now we use Theorem \ref{cor_loc_lip_integrability_unif_top} and the
Cauchy-Schwarz inequality to see that%
\begin{equation*}
\left\vert \left\vert Y^{h }-Y\right\vert _{\infty ;\left[ 0,T\right]
}\right\vert _{L^{r}}\leq c_{2}\left\vert \rho _{\hat{p}-\text{var};[0,T]}(%
\mathbf{\hat{X}}^{h },\mathbf{\hat{X}})\right\vert
_{L^{2r}}\left\vert \exp \left\{ c_{2}\left( N_{\alpha ,[0,T]}(\mathbf{\hat{X%
}}^{h })+N_{\alpha ,[0,T]}(\mathbf{\hat{X}})+1\right) \right\}
\right\vert _{L^{2r}}
\end{equation*}%
holds for a constant $c_{2} > 0$. The uniform tail estimates $\left( \ref%
{eqn_unif_tails_rates}\right) $ show that%
\begin{equation*}
\sup_{h \geq 0}\left\vert \exp \left\{ c_{2}\left( N_{\alpha ,[0,T]}(%
\mathbf{\hat{X}}^{h })+N_{\alpha ,[0,T]}(\mathbf{\hat{X}})+1\right)
\right\} \right\vert _{L^{2r}}\leq c_{3}<\infty .
\end{equation*}%
Applying \cite[Theorem 5]{FR14} with $\gamma  = \frac{\rho }{1 - 2\rho \eta}$ shows that
\begin{equation*}
\left\vert \rho _{\hat{p}-\text{var};[0,T]}(\mathbf{\hat{X}}^{h },%
\mathbf{\hat{X}})\right\vert _{L^{2r}}\leq c_{4}\sup_{t\in \left[ 0,T\right]
}\left\vert X_{t}^{h }-X_{t}\right\vert _{L^{2}}^{1-\frac{\rho }{%
\gamma }}= c_4 \delta \left( h \right) ^{\eta }
\end{equation*}%
for a constant $c_{4}$ which yields the claim.
\end{proof}

\subsection{$L^{r}$-rates for the simplified Euler schemes}\label{sec:prob-conv-results}

For $N\geq 2$, step-$N$ Euler schemes contain iterated integrals whose
distributions are not easy to simulate when dealing with Gaussian processes.
In contrast, the simplified step-$N$ Euler schemes avoid this difficulty by
substituting the iterated integrals by a product of increments. In the
context of fractional Brownian motion, it was introduced in \cite{DNT12}. We
make the following definition: If $V=\left( V_{1},\ldots ,V_{d}\right) $ is
sufficiently smooth, $\mathbf{x}$ is a $p$-rough path, $y\in \mathbb{R}^{e}$
and $N\geq \lfloor p\rfloor $, we set%
\begin{equation*}
\mathcal{E}_{\left( V\right) }^{\text{simple}}\left( y,S_{N}\left( \mathbf{x}%
\right) _{s,t}\right) :=\sum_{k=1}^{N}\frac{1}{k!}V_{i_{1}}\ldots
V_{i_{k}}I\left( y\right) x_{s,t}^{i_{1}}\cdots x_{s,t}^{i_{k}}
\end{equation*}%
for $s<t$ and%
\begin{equation*}
\mathfrak{E}_{\text{simple}}^{S_{N}\left( \mathbf{x}\right) _{s,t}}y:=y+%
\mathcal{E}_{\left( V\right) }^{\text{simple}}\left( y,S_{N}\left( \mathbf{x}%
\right) _{s,t}\right) .
\end{equation*}%
Given $D=\left\{ 0=t_{0}<\ldots <t_{n}=T\right\} $ and a path $\mathbf{x}\in
C^{p-var}_0 \left( \left[ 0,T\right] ;G^{\lfloor p\rfloor }\left( \mathbb{R}%
^{d}\right) \right) $ we define the \emph{simplified} (step-$N$) Euler
approximation to the RDE solution $y$ of%
\begin{equation*}
dy=V\left( y\right) \,d\mathbf{x}
\end{equation*}%
with starting point $y_{0}\in \mathbb{R}^{e}$ at time $t_{k}\in D$ by%
\begin{equation*}
y_{t_{k}}^{\text{simple Euler;}D}:=\mathfrak{E}_{\text{simple}%
}^{t_{k}\leftarrow t_{0}}y_{0}:=\mathfrak{E}_{\text{simple}}^{S_{N}\left( 
\mathbf{x}\right) _{t_{k-1},t_{k}}}\circ \cdots \circ \mathfrak{E}_{\text{%
simple}}^{S_{N}\left( \mathbf{x}\right) _{t_{0},t_{1}}}y_{0}
\end{equation*}
and at time $t \in (t_k,t_{k+1})$ by

\begin{align*}
y_{t}^{\text{simple Euler;}D} := \left( \frac{t - t_k}{t_{k+1} - t_k}
\right) \left( y_{t_{k+1}}^{\text{simple Euler;}D} - y_{t_k}^{\text{simple
Euler;}D} \right) + y_{t_k}^{\text{simple Euler;}D}.
\end{align*}

\begin{theorem}\label{cor:simplified-stepN-euler}
Assume the driving Gaussian noise $X$ satisfies Condition \ref{mixed_hoel}. 
Choose $N \in \{2,3\}$, $\eta_1 > 0$ and $\eta_2 > 0$ such that 
\begin{align*}
  N > 2\rho - 1, \quad \eta_1 < \frac{1}{\rho} - \frac{1}{2} \quad \text{and} \quad
  \eta_2 < \frac{N + 1}{2\rho} - 1.
\end{align*}
 Assume that $\left\vert V\right\vert _{Lip^{\theta }}\leq \nu <\infty $ for some $\theta \in (2,\infty ]$ which satisfies
  $\theta > \frac{2\rho }{1 - 2\rho\eta_1}$ and $\theta \geq N$. Let $D$ be a dissection of $[0,T]$ with mesh size at most $h > 0$.

Then for any $r\geq 1$ there is a constant $C=C(\rho, K, N, \eta_1, \eta_2, \theta ,\nu ,r)$ such that
\begin{equation*}
\left| \left\vert Y - Y^{\text{simple Euler;}D}\right\vert_{\infty}
\right|_{L^r} \leq C ( h^{\eta_1} + h^{\eta_2}) 
\end{equation*}%
for all $h >0$.
\end{theorem}

\begin{remark}
In the proof, we will see that the rate $\eta_1$ is the rate
for the Wong-Zakai approximation
and $\eta_2$ comes from the rate of the step-$N$ Euler approximation. 
In particular, for $%
\rho =1$, we can choose $N=2$ to obtain a rate arbitrary close to $\frac{1}{2}
$ and the rate does not increase even if we choose $N = 3$. For $\rho >1$, the choice $N=3$ gives a rate of almost $\frac{1}{\rho }-%
\frac{1}{2}$. 
\end{remark}

From this remark, we immediately obtain

\begin{corollary}\label{cor:strong_rates_smooth_vf}
Assume the driving Gaussian noise $X$ satisfies Condition \ref{mixed_hoel}. 
 Assume that the vector fields $V = (V_1,\ldots,V_d)$ are bounded, $C^\infty$ with bounded derivatives.

 (i) Case of $\rho < 3/2$. The simplified step-2 Euler scheme converges in $L^r$, for any $r\geq 1$, and rate $\frac{3}{2 \rho} - 1 - \delta$, for any $\delta > 0$.
 
 (ii) In the general case of  $\rho < 2$, the simplified step-3 Euler scheme converges in $L^r$, for any $r\geq 1$, and rate $\frac{1}{\rho} - \frac{1}{2} - \delta$, for any $\delta > 0$, 

\end{corollary}

\begin{proof}[Proof of Theorem \ref{cor:simplified-stepN-euler}] 
Let $X^{h }$ denote the Gaussian process whose sample paths are
piecewise linear approximated at the time points given by $D$ and let $%
Y^{h }\colon \left[ 0,T\right] \rightarrow \mathbb{R}^{e}$ denote the
pathwise solution to the equation 
\begin{equation*}
dY^{h }=V(Y^{h })\,dX^{h };\quad Y_{0}^{h
}=Y_{0}\in \mathbb{R}^{e}.
\end{equation*}%
Then for any $t_{k},t_{k+1}\in D$ we have%
\begin{equation*}
\mathbf{X}_{t_{k},t_{k+1}}^{h ;k;i_{1},\ldots ,i_{k}}=\frac{1}{k!}%
X_{t_{k},t_{k+1}}^{i_{1}}\cdots X_{t_{k},t_{k+1}}^{i_{k}},
\end{equation*}%
hence $Y_{t}^{\text{simple Euler;}D}=Y_{t}^{h ;\text{ Euler;}D}$ for
any $t\in D$ and thus%
\begin{equation*}
\left\vert Y_{t}-Y_{t}^{\text{simple Euler;}D}\right\vert \leq \left\vert
Y-Y^{h }\right\vert _{\infty }+\max_{t_{k}\in D}\left\vert
Y_{t_{k}}^{h }-Y_{t_{k}}^{h ;\text{ Euler;}D}\right\vert
\end{equation*}%
if $t\in D$. For $t\notin D$, choose $t_{k}\in D$ such that $t_{k}<t<t_{k+1}$%
. Set $a=\frac{t-t_{k}}{t_{k+1}-t_{k}}$ and $b=\frac{t_{k+1}-t}{t_{k+1}-t_{k}%
}$, i.e. $a+b=1$. In the following, the relation $A \lesssim B$ means $A \leq \text{const.}\, B$ where the constant does not depend on $h$ or $t$. By the triangle inequality,%
\begin{eqnarray*}
\left\vert Y_{t}-Y_{t}^{\text{simple Euler;}D}\right\vert &\leq &a\left\vert
Y_{t}-Y_{t_{k+1}}\right\vert +b\left\vert Y_{t}-Y_{t_{k}}\right\vert
+a\left\vert Y_{t_{k+1}}-Y_{t_{k+1}}^{\text{simple Euler;}D}\right\vert \\
 &\quad &+\ b\left\vert Y_{t_{k}}-Y_{t_{k}}^{\text{simple Euler;}D}\right\vert \\
&\lesssim &h ^{1/p}\left\Vert Y\right\Vert _{1/p\text{-H\"{o}l};\left[
0,T\right] }+\max_{t_{k}\in D}\left\vert Y_{t_{k}}-Y_{t_{k}}^{\text{simple
Euler;}D}\right\vert \\
&\lesssim &h ^{1/p}\left( \left\Vert \mathbf{X}\right\Vert _{1/p\text{%
-H\"{o}l};\left[ 0,T\right] }\vee \left\Vert \mathbf{X}\right\Vert _{1/p%
\text{-H\"{o}l};\left[ 0,T\right] }^{p}\right) +\left\vert Y-Y^{h
}\right\vert _{\infty } \\
&\quad &+\ \max_{t_{k}\in D}\left\vert Y_{t_{k}}^{h
}-Y_{t_{k}}^{h ;\text{ Euler;}D}\right\vert
\end{eqnarray*}%
for $p>2\rho $ sufficiently small, where we used \cite[Theorem 10.14]{FV10} in the last inequality. Since the estimate holds uniformly over $t$, we can
pass to the sup-norm on the left hand side of the inequality. We now take the $L^r$-norm and use the triangle inequality on the right hand side.

Since $\mathbf{X}$ is the lift of a Gaussian process, $\left\Vert \mathbf{X}\right\Vert _{1/p\text{-H\"{o}l};\left[ 0,T\right]}$ has Gaussian tails (\cite[Theorem 15.33]{FV10}). Therefore, all its moments are finite, and we can choose $p$ such that $1/\eta_1 \geq p > 2\rho$ to obtain
\begin{align*}
 \left| h ^{1/p}\left( \left\Vert \mathbf{X}\right\Vert _{1/p\text{%
-H\"{o}l};\left[ 0,T\right] }\vee \left\Vert \mathbf{X}\right\Vert _{1/p%
\text{-H\"{o}l};\left[ 0,T\right] }^{p}\right) \right|_{L^r} \lesssim h^{\eta_1}.
\end{align*}

Corollary \ref{cor:wong_zakai_pl} implies that
\begin{equation*}
\left| \left\vert Y -Y^{h }\right\vert_{\infty} \right|_{L^r} \lesssim h ^{\eta_1}
\end{equation*}%
holds for all $h >0$. Now we choose $p'>2\rho $
such that $\frac{N+1}{p'}-1= \eta_2 $ and apply
Theorem \ref{theorem_euler_lp} to estimate the last term. Since $\left| d_{1/p'\text{-H\"{o}l}}(\mathbf{X}^{h}, \mathbf{X})\right|_{L^r} \to 0$ for $h \to 0$, clearly $\sup_{h>0} \left| \left\Vert \mathbf{X}^{h
}\right\Vert _{1/p'\text{-H\"{o}l};\left[ 0,T\right] } \right|_{L^r} < \infty$ and we obtain a uniform estimate of the form
\begin{align*}
 \left| \max_{t_{k}\in D}\left\vert Y_{t_{k}}^{h
}-Y_{t_{k}}^{h ;\text{ Euler;}D}\right\vert \right|_{L^r} \lesssim h^{\eta_2}
\end{align*}
which yields the claim.
%
%
\end{proof}



\section{Multilevel simulation of RDEs}
\label{sec:mult-simul-rdes}

In the spirit of Giles~\cite{G08b} we consider a multilevel Monte Carlo
procedure in connection with the developed schemes for RDEs. In this context
we reconsider and refine the complexity analysis by Giles~\cite{G08b} in certain
respects. On the one hand we relax the requirement $\alpha\ge1/2$ in
Giles~\cite{G08b} concerning the bias rate, and on the other we keep track of
various proportionality constants more carefully. 
  M\"uller-Gronbach and Ritter~\cite[Theorem 1]{MGK09} give a very general
  abstract multilevel Monte Carlo complexity result, which includes
  Theorem~\ref{thr:ml-complexity} (but not
  Theorem~\ref{thr:ml-complexity-log}) as a special case as far as rates are
  concerned. However, we also feel that the balance between the various
  constants of proportionality involved can make a big difference for the
  performance of a multilevel algorithm in practice.
 
(Cf.~the importance of various proportionality constants in the multilevel
Andersen-Broadie algorithm for simulating dual prices of American options due
to multilevel sub-simulation in~\cite{BSD13}.  See also Collier
  et al.~\cite{CHNST15} for an empirical approach to constructing optimal
  multilevel Monte Carlo algorithms.)  Furthermore, we will also give a
  discussion about the optimal balance between bias and variance in the
  multilevel Monte Carlo algorithm in Section~\ref{sec:balanc-bias-vari} below.

We adapt the main theorem of \cite{G08b} to our needs. Below one
should think
\begin{equation*}
P=f\left( Y_{\cdot}\right) 
\end{equation*}
for a Lipschitz function $f$ and $Y$ the solution to the Gaussian RDE $%
dY=V\left( Y\right) dX$. Let $\widehat{P}_{l}$ denote some (modified)
Milstein approximation \`a la \cite{DNT12}, for instance~%
\eqref{eq:simplified-stepN-euler}, based on a meshsize $h_{l}=T/(M_{0}M^{l}).
$ Recall the basic idea 
\begin{align*}
E\left[ P\right] & \approx E\left[ \widehat{P}_{L}\right] \text{ for }L\text{
large} \\
& =E\left[ \widehat{P}_{0}\right] +\sum_{l=1}^{L}E\left[ \widehat{P}_{l}-%
\widehat{P}_{l-1}\right]
\end{align*}
set $\widehat{P}_{-1}\equiv0$ and define the (unbiased) estimator $\widehat {%
Y}_{l}$ of $E\left[ \widehat{P}_{l}-\widehat{P}_{l-1}\right] $, say 
\begin{equation}
\widehat{Y}_{l}=\frac{1}{N_{l}}\sum_{i=1}^{N_{l}}\left( \widehat{P}%
_{l}^{\left( i\right) }-\widehat{P}_{l-1}^{\left( i\right) }\right) 
\label{defYhatl}
\end{equation}
based on $i=1,\dots,N_{l}$ independent samples. Note that $\widehat{P}%
_{l}^{\left( i\right) }-\widehat{P}_{l-1}^{\left( i\right) }$ comes from
approximations with different mesh but the same realization of the driving
noise.

    Any implementation of our proposed algorithm relies on samples of the
    increments of the underlying Gaussian process $X$, say on a grid with size
    $h_l^{-1}$. In the following, we assume that we know the covariances of
    those increments in closed form. For concreteness, let $\Sigma$ be the
    covariance matrix of the vector $\Delta X$ (of size $h_l^{-1}$) of
    increments of $X$. Clearly, we can always obtain samples from $\Delta X$
    by the Cholesky factorization of $\Sigma$, at cost proportional to
    $h_l^{-2}$---disregarding the one-off cost of computing the Cholesky
    factorization itself. In this case, the cost of simulating one trajectory
    of the approximate solution at level $l$ is, thus, proportional to
    $h_l^{-2}$. 

    On the other hand, when the increments are stationary,
    $\Sigma$ can be embedded into a circulant matrix, and FFT-methods can be
    employed to sample $\Delta X$ at cost proportional to $h_l^{-1}
    \log(h_l^{-1})$. (This case includes the fractional Brownian motion.) We
    refer to Dieker~\cite{D04} for a description of this and other related
    methods.

    Finally, in case of a standard Brownian motion, it is of course possible
    to simulate $\Delta X$ at cost proportional to $h_l^{-1}$. For the
    multilevel analysis below, all three cases are going to be addressed.

\subsection{Giles' complexity theorem revisited}

\begin{theorem}
  \label{thr:ml-complexity}
  
  In the spirit of Giles, we assume that there are constants $c_{1}$,
  $c_{2}^{\prime}$, $c_{2}$, $c_3$ and a rate $\gamma$ such that
  \begin{itemize}
  \item[(i)] $E\left[ \widehat{P}_{l} - P \right] \le c_{1} h_{l}^{\alpha}$,
  \item[(ii)] $E\left[ \widehat{Y}_{0}\right] = E\left[ \widehat{P}_{0}\right] 
    $ and $E\left[ \widehat{Y}_{l}\right] = E\left[ \widehat {P}_{l} - \widehat{P%
      }_{l-1}\right] $, $l>0$,
  \item[(iii)] $\var\left[ \widehat{Y}_{0}\right] \leq c_{2}^{\prime}N_{0}^{-1}$
    and $\var\left[ \widehat{Y}_{l}\right] \leq c_{2}N_{l}^{-1}h_{l}^{\beta}$ for $%
    l\in\mathbb{N}$,\footnote{%
      We distinguish between $c_{2}^{\prime}$ and $c_{2}$, since the former
      controls the variance $\var\left[ \widehat{Y}_{0}\right] $, which is often
      already proportional to the variance of $f(Y_{\cdot})$, whereas the latter
      controls the variance of the \emph{difference} $\widehat{Y}_{l}$, which is
      often much smaller in size.}
  \item[(iv)] $C_l \le c_3 N_l h_l^{-\gamma}$, $l \ge 0$,
  \end{itemize}
  where $C_l$ denotes the computational cost at level $l$.
  We further need to assume that $0 < \beta < \gamma$, $0<\alpha$.

Then for every $\varepsilon>0$, there are choices $L$ and $N_{l}$, $0\leq l\leq
L$, to be given below in~\eqref{eq:Loptimal} and~\eqref{eq:Nloptimal},
respectively, and constants $c_{4}$ and $c_{5}$ given in~%
\eqref{eq:complexity-bound} together with~\eqref{eq:ml-constants} such that
the multilevel estimator $\widehat{Y} =\sum_{l=0}^{L}\widehat{Y}_{l}$
satisfies the mean square error bound 
\begin{equation*}
\operatorname{MSE}\equiv E\left[ \left( \widehat{Y}-E[P]\right) ^{2}\right]
\leq \varepsilon^{2}, 
\end{equation*}
with complexity bound 
\begin{equation*}
C\leq
\begin{cases}
  c_4 \varepsilon^{-\frac{\gamma+2\alpha-\beta}{\alpha}} + o\left(
    \varepsilon^{-\frac{\gamma+2\alpha-\beta}{\alpha}} \right), & 2 \alpha >
  \beta,\\
  (c_4+c_5) \varepsilon^{-\frac{\gamma+2\alpha-\beta}{\alpha}} + o\left(
    \varepsilon^{-\frac{\gamma+2\alpha-\beta}{\alpha}} \right), & 2 \alpha =
  \beta,\\
  c_5 \varepsilon^{-\gamma/\alpha} + o\left( \varepsilon^{-\gamma/\alpha}
  \right), & 2 \alpha < \beta.
\end{cases}
\end{equation*}
\end{theorem}

\begin{proof}
The basic structure of the proof is closely based on the corresponding proof
of Giles~\cite{G08b}. Hence, we will not give all the details. Note that the
parameters $T$ and $M_0$ only enter into the picture in the form
$T/M_0$. Without loss of generality, we may therefore set $M_0=1$.

 As typical, the
first step consists in a standard Lagrangian optimization procedure
(minimizing the complexity constraint by the MSE), where one ignores the
requirement of $L$ and $N_l$ being integers. In the second step one then
chooses integer valued parameters that are close to the optimal real-valued
ones. 

The mean-square-error satisfies
\begin{equation*}
  \operatorname{MSE} =E\left[  \left(  \widehat{Y}-E[P]\right)  ^{2}\right]
  =\var\left[  \widehat{Y}\right]  +\left(  E\left[  \widehat{P}_{L}\right]
    -E[P]\right)  ^{2} \leq
  c_{2}^{\prime}N_{0}^{-1}+c_{2}T^{\beta} \sum_{l=1}^{L}N_{l}^{-1}
  M^{-l\beta}+c_{1}^{2}h_{L}^{2\alpha}. 
\end{equation*}
Now we need to minimize the total computational work
\begin{equation*}
  C \leq c_{3}N_{0}h_{0}^{-\gamma} + c_{3}\sum_{l=1}^{L}N_{l} h_l^{-\gamma}  =
  c_{3}T^{-\gamma} \left[  N_{0}+ \sum_{l=1}^{L}N_{l} M^{\gamma l}\right]
\end{equation*}
under the constraint $\operatorname{MSE}\leq\varepsilon^{2}$. We first assume
$L$ to be given and minimize over $N_{0},\ldots,N_{L}$, and then we try to
find an optimal $L$. We consider the Lagrange function
\begin{multline*}
f(N_{0},\ldots,N_{L},\lambda)\equiv c_{3}T^{-\gamma} \left[  N_{0} +
\sum_{l=1}^{L} N_{l} M^{\gamma l}\right]  +\\
+\lambda\left(  c_{2}^{\prime}N_{0}^{-1} + c_{2}T^{\beta} \sum_{l=1}^{L}
N_{l}^{-1}M^{-l\beta}+c_{1}^{2}h_{L}^{2\alpha}-\varepsilon^{2}\right)  .
\end{multline*}
Taking derivatives with respect to $N_{l}$, $0\leq l\leq L$, we arrive at
\begin{gather*}
\frac{\partial f}{\partial N_{0}} = c_{3}T^{-\gamma} - \lambda c_{2}^{\prime}
N_{0}^{-2}=0,\\ 
\frac{\partial f}{\partial N_{l}}=c_{3}T^{-\gamma} M^{\gamma l}-\lambda
c_{2}T^{\beta}  M^{-l\beta}N_{l}^{-2}=0,
\end{gather*}
implying that
\begin{subequations}
\label{eq:optimal_Nl}%
\begin{gather}
N_{0}=\sqrt{\lambda}\sqrt{\frac{c_{2}^{\prime}}{c_{3}}} T^{\gamma/2},\\
N_{l}=\sqrt{\lambda}\sqrt{\frac{c_{2}}{c_{3}}} T^{(\gamma+\beta)/2}
M^{-l(\gamma+\beta)/2},\ 1\leq l\leq L,
\end{gather}
which we insert into the bound for the MSE to obtain
the Lagrange multiplier
\end{subequations}
\begin{equation}
\sqrt{\lambda} = \left[  \sqrt{c_{2}^{\prime} c_{3}} T^{-\gamma/2} +
  \sqrt{c_{2} c_{3}} T^{-(\gamma-\beta)/2}
  \frac{M^{L(\gamma - \beta)/2}-1}{M^{(\gamma-\beta
)/2-1}} \right]  \cdot \left[\varepsilon^{2} - c_{1}^{2} T^{2\alpha
} M^{-2\alpha L}\right]^{-1}.\label{eq:multiplier}
\end{equation}
By construction, we see that for any such choice of $N_{0},\ldots,N_{L}$, the
MSE is, indeed, bounded by $\varepsilon^{2}$. For fixed $L$, the total
complexity is now given by
\begin{multline}
\label{CL}
C(L) 
  = \left[ \sqrt{c_2' c_3} T^{-\gamma/2} + \sqrt{c_2c_3}
    T^{-(\gamma-\beta)/2} M^{(\gamma-\beta)/2}
    \frac{M^{L(\gamma-\beta)/2} - 1}{M^{(\gamma-\beta)/2} - 1}\right]^2
  \frac{1}{\varepsilon^{2}-c_{1}^{2} T^{2\alpha}
     M^{-2\alpha L}}. 
\end{multline}
In general, the optimal (but real-valued) choice of $L$ would now be the
arg-min of complexity estimate corresponding to the above choices of $N_l$,
which we could not determine explicitly in an arbitrary regime.  

We now turn to the second step, i.e., to integer-valued parameter choices. We
parametrize the optimal choice of $L$ by $d_1$ in
\begin{equation}
  L=\left\lceil \frac{\log\left(  d_{1}c_{1}T^{\alpha}
        \varepsilon^{-1}\right)}{\alpha\log(M)}\right\rceil. \label{eq:Loptimal}
\end{equation}
The proper choice of the parameter $d_1$ is discussed in detail in the
subsequent subsection~\ref{sec:balanc-bias-vari}.  Moreover, we choose with
$\kappa=\frac{\gamma-\beta}{2\alpha}$
\begin{subequations}
\label{eq:Nloptimal}%
\begin{align}
N_{0}  &= \left\lceil \sqrt{c_{2}^{\prime}}
    \frac{d_1^2}{d_1^2 - 1} \left(  \sqrt{c_{2}^{\prime}} 
      + \sqrt{c_{2}} T^{\beta/2}
      M^{\alpha\kappa} \frac{d_{1}^{\kappa} c_{1}^{\kappa
} T^{\alpha\kappa} \varepsilon^{-\kappa} -
1}{M^{\alpha \kappa}-1}\right) \varepsilon^{-2}  \right\rceil
,\\
N_{l}  &= \left\lceil \sqrt{c_2}
  \frac{d_1^2}{d_1^2-1} \left( \sqrt{c_2'} T^{\beta/2} + \sqrt{c_2}
    T^{\beta} \frac{d_1^\kappa c_1^\kappa
      T^{\alpha\kappa} \varepsilon^{-\kappa} -
      1}{M^{\alpha\kappa} -1} \varepsilon^{-2} M^{-l(\beta+\gamma)/2} \right)\right\rceil ,
\end{align}
$1\leq l\leq L$.
\end{subequations}

By construction, the MSE will be bounded by $\varepsilon^{2}$
  using the choices \eqref{eq:Loptimal} and~\eqref{eq:Nloptimal}. In the next
  step, we insert these definitions into the complexity bound. In order to
  obtain suitable simplifications, we use that $\lceil x \rceil \le x + 1$ for
  real $x$.

After a tedious calculation, we finally arrive at the expression
\begin{equation}
C\leq c_{4}\varepsilon^{-2(1+\kappa)}+c_{5}\varepsilon^{-\gamma/\alpha}+c_{6}
\varepsilon^{-(2+\kappa)}+c_{7}\varepsilon^{-2}+c_{8}, \label{eq:complexity-bound}%
\end{equation}
where (once more including the dependence on $M_0$)
\begin{subequations}
\label{eq:ml-constants}
\begin{align}
  c_{4}  &= c_{1}^{2\kappa} c_{2} c_{3} \frac{d_{1}^{2+2\kappa}}{d_1^2-1}
  \frac{M^{3\alpha\kappa}}{\left( M^{\alpha\kappa} - 1\right)^2},\\
  c_{5} &= c_{1}^{\gamma/\alpha} c_{3} d_1^{\gamma/\alpha}
  \frac{M^\gamma}{M^\gamma - 1},\\ 
  c_{6} &= c_1^\kappa c_3 \frac{d_1^{2+\kappa}}{d_1^2-1} \biggl[ \sqrt{c_2 c_2'}
  T^{-\gamma/2} M_0^{\gamma/2}
  - 2 c_2 T^{-\kappa\alpha} M_0^{\kappa\alpha}
  \frac{M^{\alpha\kappa}}{M^{\alpha\kappa} -1} \biggr] 
  \frac{M^{\alpha\kappa}(1+M^{\alpha\kappa})}{M^{\alpha\kappa}-1},\\  
  c_{7}  &= c_3 \frac{d_1^2}{d_1^2-1} \biggl[c_2' T^{-\gamma} M_0^{\gamma} -
  \sqrt{c_2} (1+\sqrt{c_2'}) T^{-(\gamma-\beta/2)} M_0^{\gamma-\beta/2}
  \frac{M^{\alpha\kappa}}{M^{\alpha\kappa} - 1} \\ 
  &\quad  + c_2
  T^{-2\alpha\kappa} M_0^{2\alpha\kappa}
  \frac{M^{2\alpha\kappa}}{\left( M^{\alpha\kappa} - 1 \right)^2}
  \biggr],\nonumber{}\\ 
c_{8}  &= c_3 T^{-\gamma} M_0^\gamma.
\end{align}
\end{subequations}

If $2\alpha>\beta$, then $\varepsilon^{-2(1+\kappa)}$ is the
  leading order term, and we obtain the first alternative in the complexity
  bound of the theorem statement. If $2\alpha < \beta$, then
  $\varepsilon^{-\gamma/\kappa}$ is the leading order term---i.e., the total
  work is dominated by the work needed for simulating one trajectory at the
  finest level---, and we obtain the third alternative in the complexity
  bound. Finally, when $2\alpha = \beta$, then the two terms are of the same
  order.
\end{proof}

  \begin{remark}
    \label{rem:complexity-explosion}
    Note that the complexity bound~\eqref{eq:complexity-bound} needs to be
    considered with care when $\beta$ is close to $\gamma$. Indeed, the
    coefficients $c_4$ and $c_6$ explode as $\beta \to \gamma$. However,
    please note that for $\varepsilon$ fixed and $\beta \to \gamma$ we have
    that the corresponding powers $\varepsilon^{-2(1+\kappa)}$ and
    $\varepsilon^{-(2+\kappa)}$ converge to $\varepsilon^{-2}$ as $\kappa \to
    0$. Moreover, a closer look at \eqref{eq:ml-constants} verifies that $c_4
    \varepsilon^{-2(1+\kappa)} + c_6 \varepsilon^{-(2+\kappa)}$ remains
    bounded as $\beta \to \gamma$. Hence, there is no inconsistency in the
    results reported in Theorem~\ref{thr:ml-complexity} with the results of
    \cite{G08b}.
  \end{remark}

Next, we consider the situation when the actual complexity bound for computing
one trajectory has logarithmic terms in $h_l^{-1}$, for instance in the case
of fractional Brownian motion.
\begin{theorem}
  \label{thr:ml-complexity-log}
  In the setting of Theorem~\ref{thr:ml-complexity}, we replace the complexity
  bound (iv) by the new condition
  \begin{itemize}
  \item[(iv')] $C_l \le c_3 N_l h_l^{-\gamma} \log(h_l^{-1})$, $0 \le l$.
  \end{itemize}
  Then the choices of $L$ and $N_l$, $0 \le l \le L$, given in
  (\ref{eq:Loptimal}) and (\ref{eq:Nloptimal}), respectively, lead to a mean
  squared error bound $\operatorname{MSE} \le \varepsilon^2$ with a complexity
  bound 
  \begin{equation*}
    C\leq
\begin{cases}
  c_4' \varepsilon^{-\frac{\gamma+2\alpha-\beta}{\alpha}} \log(\varepsilon^{-1}) + o\left(
    \varepsilon^{-\frac{\gamma+2\alpha-\beta}{\alpha}} \log(\varepsilon^{-1}) \right), & 2 \alpha >
  \beta,\\ 
  (c_4'+c_5') \varepsilon^{-\frac{\gamma+2\alpha-\beta}{\alpha}} \log(\varepsilon^{-1}) + o\left(
    \varepsilon^{-\frac{\gamma+2\alpha-\beta}{\alpha}} \log(\varepsilon^{-1}) \right), & 2 \alpha =
  \beta,\\
  c_5' \varepsilon^{-\gamma/\alpha} \log(\varepsilon^{-1}) + o\left( \varepsilon^{-\gamma/\alpha}
 \log(\varepsilon^{-1}) \right), & 2 \alpha < \beta,
\end{cases}
  \end{equation*}
  where $c_i' = c_i / \alpha$, $i=4,5$.
\end{theorem}
\begin{proof}
  We choose $L$ and $N_0, \ldots, N_L$ as in
  Theorem~\ref{thr:ml-complexity}. As the mean squared error is not effected
  by the changed complexity bound, we obtain that the mean squared error is,
  once more, bounded by $\varepsilon^2$. On the other hand, for the complexity
  bound, we note that
  \begin{equation*}
    C \le c_3 \sum_{l=0}^L N_l h_l^{-\gamma} \log(h_l^{-1}) \le c_3
    \log(h_L^{-1}) \sum_{l=0}^L N_l h_l^{-\gamma}.
  \end{equation*}
  The last sum gives the upper bound given in~(\ref{eq:complexity-bound}).
  On the other hand, by~(\ref{eq:Loptimal}) (using once more $\lceil x \rceil
  \le x+1$ for $x \in \mathbb{R}$), we have
  \begin{equation*}
    \log(h_L^{-1}) \le \log\left( M_0 M d_1^{1/\alpha} c_1^{1/\alpha} \right)
    + \frac{\log(\varepsilon^{-1})}{\alpha}. \qedhere
  \end{equation*}
\end{proof}

\begin{table}[!htb]
  \centering
  \begin{tabular}{|l|c|c|c|}
    \hline
    & MLMC & classical MC & speed up of MLMC \\
    \hline
    Generic & $\varepsilon^{-(\gamma+2\alpha-\beta)/\alpha}$ & 
    $\varepsilon^{-(2+\gamma/\alpha)}$ & $\varepsilon^{-\beta/\alpha}$\\
    $\alpha = \beta/2$ & $\varepsilon^{-\gamma/\alpha}$ &
    $\varepsilon^{-(2+\gamma/\alpha)}$ & $\varepsilon^{-2}$ \\
    $\alpha = \beta$ & $\varepsilon^{-(\gamma/\alpha+1)}$ &
    $\varepsilon^{-(2+\gamma/\alpha)}$ & $\varepsilon^{-1}$ \\
    \hline
  \end{tabular}
  \caption{Comparison of asymptotic complexity for multilevel and standard
    Monte Carlo in the framework of Theorem~\ref{thr:ml-complexity}---i.e.,
    ignoring log-terms. $\alpha$
    denotes the weak order of convergence, $\beta/2$ the strong order. We
    distinguish the cases $\alpha = \beta/2$ and $\alpha = \beta$. We choose
    the work rate to be $\gamma = 1$, which is the most relevant case (up to
    logarithmic terms).}
  \label{tab:summary-general}
\end{table}

Under the assumptions of Theorem~\ref{thr:ml-complexity}, we can summarize the
complexity requirements for the multi-level and the classical Monte Carlo
methods, respectively, to obtain an MSE of order $\varepsilon^2$, see
Table~\ref{tab:summary-general}. In particular, note that the complexity of
classical Monte Carlo is asymptotically worse by a factor $\varepsilon^{-2}$ in
the ``non-regular'' case, when the weak rate is equal to the strong rate,
but still worse by a factor $\varepsilon^{-1}$ when the weak rate is actually
twice as good as the strong rate. With Theorem~\ref{thr:ml-complexity-log} we
get the same speed ups, as the logarithmic terms appear for both multilevel
and single level Monte Carlo.

\subsection{Balancing bias and variance in the multilevel algorithm}
\label{sec:balanc-bias-vari}

In the now classical works of Giles on multilevel Monte Carlo, the choice
$d_{1} = \sqrt{2}$ is advocated, see for instance~\cite%
{G08b}. This means that we reserve the same error tolerance $\varepsilon/2$
both for the bias or discretization error and for the statistical or Monte
Carlo error. Indeed, the choice of $d_{1}$ corresponds to the distribution of
the total MSE $\varepsilon^{2}$ between the statistical and the discretization
error according to
\begin{equation*}
\varepsilon^{2} =
\underbrace{\frac{\varepsilon^{2}}{d_{1}^{2}}}_{\text{disc.~error}}  +
\underbrace{\left( 1 - \frac{1}{d_{1}^{2}} \right)
  \varepsilon^{2}}_{\text{stat.~error}} . 
\end{equation*}

In many situations, the choice $d_1 = \sqrt{2}$ is not optimal. In fact, even
in an ordinary Monte Carlo framework, one should not blindly follow this
rule. For instance, for an SDE driven by a Brownian motion, the Euler scheme
usually (i.e., under suitable regularity conditions) exhibits weak
convergence with rate $1$. Assuming the same constants for the weak error
and the statistical error, a straightforward optimization will show that it
is optimal to choose the number of timesteps and the number of Monte Carlo
samples such that the (squared) discretization error is $\varepsilon^2/3$ and
the (squared) statistical error is $2\varepsilon^2/3$.

Let us now study in detail the asymptotic behavior for
$\varepsilon\downarrow0$ of the optimal choice for $d_{1}$ for a given fixed
$M$ in the context of Theorem~\ref{thr:ml-complexity-log}. Once again assuming
$M_0=1$ by absorbing it into $T$, we obtain the complexity
\[
D(L)=\left[  \sqrt{c_{2}^{\prime}c_{3}}T^{-\gamma/2}+\sqrt{c_{2}c_{3}%
}T^{(\beta-\gamma)/2}M^{(\gamma-\beta)/2}\frac{M^{L(\gamma-\beta)/2}%
-1}{M^{(\gamma-\beta)/2}-1}\right]  ^{2}\frac{L\log M-\log T}{\varepsilon
^{2}-c_{1}^{2}T^{2\alpha}M^{-2\alpha L}},
\]
i.e., \eqref{CL} with an extra factor stemming from $\log h_{L}^{-1}= L \log
M-\log T$. (We note that we disregard integer constraints on $L$ and $N_l$ for
this asymptotic analysis.) We assume in advance that $L$ is such that $L\log
M-\log T>0$.  Setting $\frac{d}{dL}\log(D(L)) = 0$ yields
\begin{multline*}
\underbrace{\frac{2\sqrt{c_2} T^{\beta/2} M^{(\gamma-\beta)/2} }{\sqrt{c_2'} + \sqrt{c_2} T^{\beta/2} M^{(\gamma-\beta
)/2} \frac{M^{L(\gamma-\beta)/2}-1}{M^{(\gamma-\beta)/2}-1}} 
\frac{M^{L(\gamma-\beta)/2}}{M^{(\gamma-\beta)/2}-1}
\frac{\gamma-\beta}{2}}_{\equiv h_M(L,\alpha,\beta,\gamma)} +
\frac{1}{L\log M-\log T} = \frac{2\alpha c_1^2 T^{2\alpha}}{\varepsilon^{2}
  M^{2\alpha L}- c_1^2T^{2\alpha}}.
\end{multline*}
Noting that $h_M>0$,
\[
\lim_{L\rightarrow\infty}h_{M}(L,\alpha,\beta,\gamma)=\gamma-\beta>0, \quad
h_{M}(0,\alpha,\beta,\gamma) = \frac{2\sqrt{c_2' c_3}T^{-(\gamma-\beta)/2}
  M^{(\gamma -\beta)/2}}{\sqrt{c_2'c_3} T^{-\gamma/2}\frac{2}{\gamma-\beta}
  (M^{(\gamma-\beta)/2}-1)}, 
\]
we may rewrite the above equality as
\begin{equation}
\frac{2\alpha c_1^2 T^{2\alpha}}{h_{M}(L,\alpha,\beta,\gamma)+\frac{1}{L\log
    M-\log T}} + c_1^2T^{2\alpha}=\varepsilon^{2}M^{2\alpha L}.\label{ginn}%
\end{equation}
Clearly, the l.h.s.~of (\ref{ginn}) is bounded from below by $c_1^2
T^{2\alpha}$ if 
$L>\log\left(  T/M\right).$ That is, if $\varepsilon\downarrow0$ we must have
$M^{2\alpha L}\rightarrow\infty,$ hence $L\rightarrow\infty.$ Then by taking
logarithms of (\ref{ginn}) and solving for $L$ we get for $\varepsilon
\downarrow0,$%
\[
L=\frac{\log\varepsilon^{-1}}{\alpha\log M}+\frac{1}{2\alpha\log M}\log\left(
\frac{2\alpha c_1^2 T^{2\alpha}}{h_{M}(L,\alpha,\beta,\gamma) + \frac{1}{L\log
    M-\log T}}+c_1^2 T^{2\alpha}\right).
\]
Then, for $\varepsilon\downarrow0,$ i.e. $L\rightarrow\infty,$ it follows that%
\begin{equation}\label{eq:L-opt-asym}
L^{\ast} 
=\frac{1}{\alpha\log M}\log\left(  \varepsilon^{-1}\left(  \frac{2\alpha
c_1^2T^{2\alpha}}{\gamma-\beta}+c_1^2T^{2\alpha}\right)  ^{1/2}\right)  +O(\frac
{1}{\log\varepsilon^{-1}}).
\end{equation}
Now, with $d_{1}$ implicitly defined by $L^\ast = \frac{\log\left( d_1 c_1
    T^\alpha \varepsilon^{-1} \right)}{\alpha \log M}$,
cf.~\eqref{eq:Loptimal}, we obtain
\begin{equation}\label{eq:d1-opt-asym}
d_{1}=\left(  \frac{2\alpha}{\gamma-\beta}+1\right)  ^{1/2}\left(
1+O(\frac{1}{\log\varepsilon^{-1}})\right).
\end{equation}
\begin{remark}
  Notice that the dominant term in the asymptotically optimal choice for $d_1$
  only depends on the rates $\alpha, \beta, \gamma$, but not on the constants.
\end{remark}
Plugging the optimal choice $L = L^\ast$ into the (approximate) computational
cost $D$, we obtain
\begin{equation*}
D(L^{\ast}) = \frac{2\alpha+\gamma-\beta}{2\alpha^{2}}\frac{4c_2c_3
  T^{-\gamma} M^{(\gamma-\beta
)}c_{1}^{\frac{\gamma-\beta}{\alpha}}}{2 \left( M^{(\gamma-\beta)/2} - 1
\right)^2} f\left(  \frac{\gamma-\beta}{2\alpha
}\right)  \varepsilon^{-\frac{2\alpha+\gamma-\beta}{\alpha}}\log\varepsilon
^{-1} \left( 1 + O\left(\frac{1}{\log \varepsilon^{-1}} \right) \right)
\end{equation*}
with $f(x):=\left(  1+\frac{1}{x}\right)  ^{x}$
being increasing with $f(0+)=1$ and $f(x)\rightarrow e$ when $x\rightarrow
\infty.$ Comparing with Giles' choice $d_{1}=\sqrt{2},$ i.e.,
\begin{equation*}
L^{\text{Giles}}=\frac{\log\left(  \sqrt{2}c_{1}T^{\alpha}\varepsilon
^{-1}\right)  }{\alpha\log M}
\end{equation*}
we get
\[
\frac{D\left( L^{\text{Giles}} \right)}{D\left( L^\ast \right)}%
=\frac{2^{1+\frac{\gamma-\beta}{2\alpha}}}{\left(  1+\frac{\gamma-\beta
}{2\alpha}\right)  f\left(  \frac{\gamma-\beta}{2\alpha}
\right)  } \left( 1 +
O\left(\frac{1}{\log \varepsilon^{-1}} \right) \right).
\]
Note that the above fraction---plotted in
Figure~\ref{fig:Dfraction}---(asymptotically) takes its minimum value of $1$
when $\frac{\gamma-\beta}{2\alpha} = 1$. In this case, our proposed
asymptotically optimal choice of $d_1$ takes the value $\sqrt{2}$ (up to
higher order terms) and, thus, coincides with Giles' choice.
\begin{figure}[!htp]
  \centering
  \includegraphics[width=0.5\textwidth]{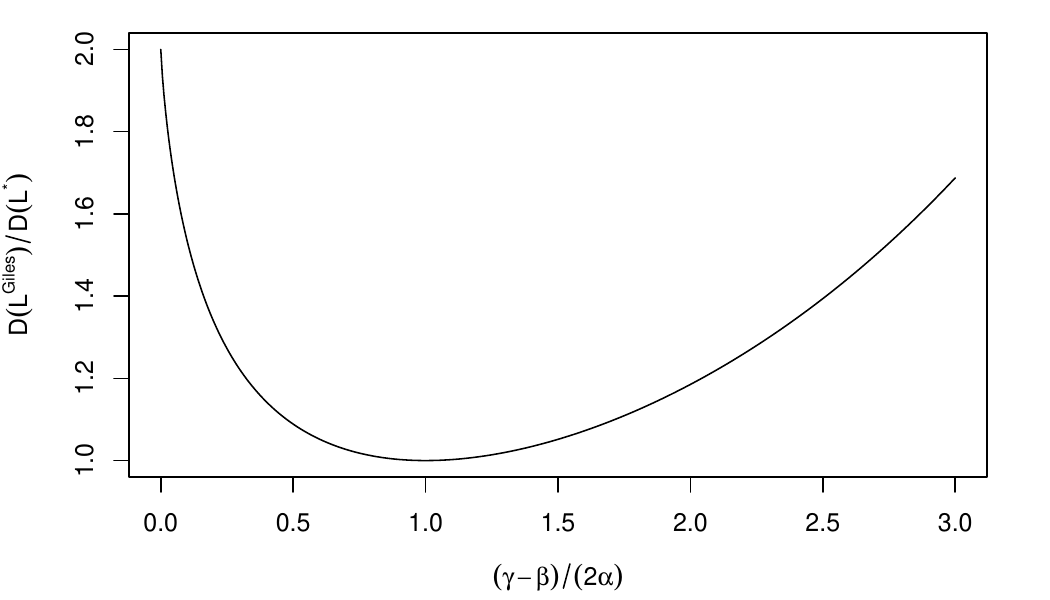}
  \caption{Dominant term of $\frac{D\left( L^{\text{Giles}} \right)}{D\left(
        L^\ast \right)}$ as a function of $\frac{\gamma-\beta}{2\alpha}$} 
  \label{fig:Dfraction}
\end{figure}

\begin{remark}
  The special case $\gamma = \beta$ requires new calculations (due to
  exploding terms). In this case, one can see that
  \[
  L^\ast = \frac{\log\varepsilon^{-2}}{2\alpha\log M}+\frac{\log
    \log\varepsilon^{-2}}{2\alpha\log M} - \frac{\log\log M^{2\alpha}}{2\alpha
  \log M} + \frac{1}{2} c_1^2 T^{2\alpha} + O\left( \frac{\log\log
    \varepsilon^{-2}}{\log \varepsilon^{-2}} \right)
  \]
  corresponding to
  \[
  d_{1} = \frac{M^{c_1^2 \alpha T^{2\alpha}/2}}{c_1 T^\alpha \sqrt{\alpha M}}
  \sqrt{\log \varepsilon^{-1}} \left( 1 + O\left( \frac{\log\log
        \varepsilon^{-2}}{\log \varepsilon^{-2}} \right) \right).
  \]
  We see that $d_1 \rightarrow\infty$ as $\varepsilon \downarrow
  0$. Nonetheless, the relative computational costs compared to $d_1 =
  \sqrt{2}$ stays bounded as
  \[
  \frac{D\left( L^{\text{Giles}} \right)}{D \left( L^\ast \right)} = 2 \left(
    1 + O\left( \frac{1}{\log \varepsilon^{-1}} \right) \right).
  \]
\end{remark}

Let us note an important difference between the multilevel algorithm for the
irregular case $\beta < 1$ explored here and the classical multilevel
algorithm of Giles~\cite{G08b} regarding the distribution of the
work-load. For the case of a classical SDE, the work is going to be
essentially equi-distributed among the levels. For the rough SDE case
considered here, we see from the proof of Theorem~\ref{thr:ml-complexity} that
most of the computational budget is actually spent on the fine grids, i.e., on
the levels with high index $l$. This is schematically represented in
Figure~\ref{fig:work-dist}, where we used the theoretical complexity estimates
from the proof of Theorem~\ref{thr:ml-complexity} with $\beta = 0.6$, $\alpha
= 0.3$ and the remaining constants set to arbitrary values.
\begin{figure}[!htb]
  \centering
  \includegraphics[width=0.5\textwidth]{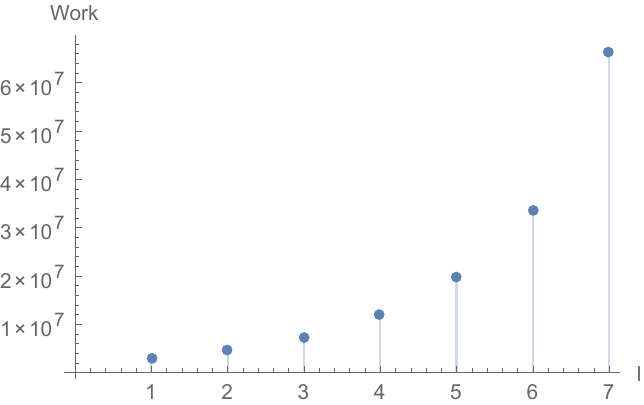}
  \caption{Work distribution among levels. Schematic presentation
      based on a case of $\beta = 0.6$.}
  \label{fig:work-dist}
\end{figure}

\subsection{Multilevel Monte Carlo for RDEs}

We shall now combine the results of Section~\ref{sec:prob_conv_for_rdes} and
Section~\ref{sec:mult-simul-rdes}. For convenience, we recall the regularity
assumptions of the convergence analysis (cf.~Condition~\ref{mixed_hoel}):
Let $X = (X^1,\ldots,X^d)\colon [0,T] \rightarrow \mathbb{R}^{d}$ be a centered, continuous Gaussian process with independent components. Assume that the covariance of every component has finite mixed $(1,\rho)$-variation for some $\rho \in [1,2)$ on $[0,T]^2$, that is, for $k=1,\dots,d$,
\begin{equation*}
\sup_{\substack{(t_{i}), (t_{j}^{\prime}) \in\mathcal{D}([0,T])
}}
\left(\sum_{t'_{j}}\left(\sum_{t_{i}}\left|    E \left[ X^k_{t_{i},t_{i+1}}X^k_{t_{j}^{\prime},t_{j+1}^{\prime}}   \right]   
\right|\right)^{\rho}\right)^{\frac{1}{\rho}} \\ 
< \infty .
\end{equation*}%

Consider the solution $Y \colon [0,T] \to \R^m$ of the RDE 
\begin{align*}
dY_t = V(Y_t)\,d\mathbf{X}_t; \quad Y_0 \in \R^m
\end{align*}
where $V = (V_1,\ldots,V_d)$ is a collection of vector fields in $\R^m$ with 
$|V|_{\text{Lip}^{\eta}} < \infty$ for some $\eta \geq \frac{2\rho}{\rho
- 1}$. Set $S := Y$ and let $S^{(h_l)}$ be the simplified step-3 Euler
approximation of $Y$ with mesh-size $h_l$ (in the case $\rho =1$, it
suffices to consider a step-2 approximation). Let $f \colon C([0,T],\R^m)
\to \R^n$ be a Lipschitz continuous functional and set $P := f(S)$, $%
\widehat{P}_l := f(S^{(h_l)})$. 

\begin{theorem}
  \label{thr:fBM-ml-comp}

Consider a functional of an RDE driven by Gaussian signal satisfying the above
assumptions, which is evaluated to within a MSE of $\varepsilon^{2}$.\\
(a) If we assume that the cost of sampling a vector of increments
of $X$ of length $N$ is proportional to $N\log(N)$, then an upper bound for
the complexity is given by
\begin{equation*}
\mathcal{O}\left( \varepsilon ^{-\theta }\right), \quad \forall \theta
>\frac{2\rho }{2-\rho}. 
\end{equation*}
(b) On the other hand, if we assume that the cost of obtaining such a sample
is proportional to $N^2$, then an upper bound of the total complexity is
\begin{equation*}
  \mathcal{O}\left( \varepsilon ^{-\theta }\right), \quad \forall \theta
>\frac{4\rho }{2-\rho}.
\end{equation*}
\end{theorem}
\begin{proof}
We want to calculate the quantities needed
in Theorem \ref{thr:ml-complexity}. As $f$ is assumed to be Lipschitz, the
weak rate of convergence is (at least) the strong rate of convergence, i.e.,
$\alpha =\beta /2$ in the notation of Theorem~\ref{thr:ml-complexity}. By
Corollary~\ref{cor:strong_rates_smooth_vf}, the strong rate is $\beta/2 =
\frac{2}{\rho} - \frac{1}{2} - \delta$ for any $\delta>0$.
Observe that
\begin{equation*}
\var\left[ \widehat{P}_{l}-P\right] \leq E\left[ \left( \widehat{P}%
_{l}-P\right) ^{2}\right] \leq \left\vert f\right\vert _{Lip}^{2}E\left[
\left\vert S^{\left( h_l \right) }-S \right\vert ^{2}\right] =O\left(
h_l^{\beta }\right)
\end{equation*}%
and 
\begin{equation*}
\var\left[ \widehat{P}_{l}-\widehat{P}_{l-1}\right] \leq \left( \var\left[ 
\widehat{P}_{l}-P\right] ^{1/2}+\var\left[ \widehat{P}_{l-1}-P\right]
^{1/2}\right) ^{2}=O\left( h_l^{\beta }\right)
\end{equation*}%
for all $\beta < \frac{2}{\rho} - 1$. Of course the variance of the average
of $N_{l}$ IID samples becomes%
\begin{equation*}
\var\left[ \widehat{Y}_{l}\right] =\frac{1}{N_{l}}\var\left[ \widehat{P}_{l}-%
\widehat{P}_{l-1}\right] =O\left( h_l^{\beta }/N_{l}\right) .
\end{equation*}%
This shows condition (iii) in Theorem~\ref{thr:ml-complexity}. Trivially, a
strong rate is also a weak rate, in the sense 
that%
\begin{equation*}
E\left( \widehat{P}_{l}-P\right) \leq E\left[ \left( \widehat{P}%
_{l}-P\right) ^{2}\right] ^{1/2}=O\left( h_l^{\beta /2}\right),
\end{equation*}%
which gives condition (i)
Condition (ii), ``unbiasedness'' is obvious for the estimator (\ref{defYhatl}%
). 

In case (a), condition (iv') of Theorem~\ref{thr:ml-complexity-log} holds, and
the theorem implies that an MSE $\varepsilon^2$ can be achieved at cost
proportional to
\begin{equation*}
  \varepsilon^{-\frac{1}{\alpha}} \log(\varepsilon^{-1}) = \varepsilon^{-\left(
      \frac{1}{\frac{1}{\rho} - \frac{1}{2}} + \delta \right)}
  \log(\varepsilon^{-1}) = \varepsilon^{-\left( \frac{2\rho}{2-\rho} + \delta
    \right)} \log(\varepsilon^{-1})
\end{equation*}
for any $\delta>0$. By choosing $\delta$ slightly larger, we may get rid of
the logarithmic term. In the end, we get $\mathcal{O}\left(
  \varepsilon^{-\theta} \right)$ for any $\theta > \frac{2\rho}{2-\rho}$.

On the other hand, in the general case (b), we rely on
Theorem~\ref{thr:ml-complexity} with $\gamma=2$. By similar calculations as
above (replacing $1/\alpha$ by $2/\alpha$), we arrive at the result for this
case. 
\end{proof}

  \begin{remark}
    If $X$ is a fractional Brownian motion with Hurst parameter $H>1/4$ (with
    $\rho = 1/(2H) < 2$), then we can generate samples of increments at cost
    proportional to $N \log(N)$ (e.g., by circulant embedding methods, see
    \cite{D04}), and we are, hence, in the situation of
    Theorem~\ref{thr:fBM-ml-comp} (a).
  \end{remark}

In Table~\ref{tab:complexity-summ-fbm} we compare typical asymptotic
complexities for RDEs driven by fractional Brownian motion for both the
multi-level and the classical Monte Carlo estimators. We distinguish between
the ``non-regular'' regime when $\alpha = \beta/2$ and the more favorable
regime when $\alpha = \beta$. Moreover, we have simplified the presentation in
Table~\ref{tab:complexity-summ-fbm} by neglecting the higher order
and logarithmic
terms. I.e., any complexity $\varepsilon^{-a}$ in
Table~\ref{tab:complexity-summ-fbm} should actually be understood as
$\varepsilon^{-a-\delta}$ for any $\delta>0$. Thus, when the Hurst parameter
is not too small, multi-level can make the difference between a feasible
simulation and a quite impossible one. E.g., when $H=2/5$ and the payoff
function $f$ is so irregular that the weak rate of convergence is not better
than the strong rate of convergence, the complexity for a standard Monte Carlo
estimator would be roughly of order $\varepsilon^{-5.33}$, whereas the
multi-level version would have complexity roughly of order
$\varepsilon^{-3.33}$, which is not much worse than the complexity of a
standard Monte Carlo estimator of the usual Brownian motion
regime. Admittedly, when $H=1/3$ and one has an irregular payoff, then both
standard and multi-level Monte Carlo are very costly computation wise.

\begin{table}[!htb]
  \centering
  \begin{tabular}{|l|c|c|c|c|}
    \hline
    & \multicolumn{2}{|c|}{$H = 2/5$} & \multicolumn{2}{|c|}{$H = 1/3$} \\
    \hline
    & MLMC & classical MC & MLMC & classical MC\\
    \hline
    $\alpha = \beta/2$ & $\varepsilon^{-10/3} \approx \varepsilon^{-3.33}$ &
    $\varepsilon^{-16/3} \approx \varepsilon^{-5.33}$ & $\varepsilon^{-6}$ &
    $\varepsilon^{-8}$ \\
    $\alpha = \beta$ & $\varepsilon^{-8/3} \approx \varepsilon^{-2.67}$ &
    $\varepsilon^{-11/3} \approx \varepsilon^{-3.67}$ & $\varepsilon^{-4}$ &
    $\varepsilon^{-5}$ \\
    \hline
  \end{tabular}
  \caption{Comparison of asymptotic complexities for multi-level and classical
    Monte Carlo for RDEs driven by fractional Brownian motion with Hurst index
    $H=2/5$ and $H=1/3$. We distinguish between the cases $\alpha = \beta/2$
    and $\alpha = \beta$. For this summary, we neglect the ``higher order
    terms'', i.e., we neglect the $\delta$ in $\beta/2 = 1/\rho-1/2-\delta$. For
    $H=2/5$, we set $\beta/2 = 3/10$, and for $H=1/3$ we set $\beta/2 = 1/6$.}
  \label{tab:complexity-summ-fbm}
\end{table}

\section{Numerical experiments}
\label{sec:numer-exper}

\subsection{A linear, non-smoothing example}
\label{sec:linear-non-smoothing}

\begin{figure}[!htp]
  \centering
  \includegraphics[width=\textwidth]{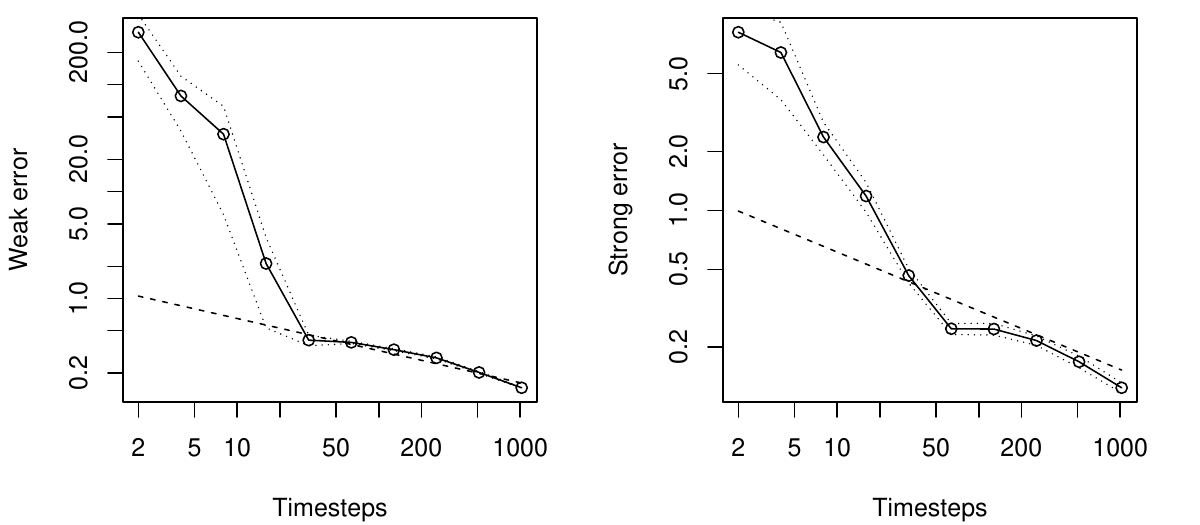}
  \caption{Strong and weak error for a fBm with Hurst index
    $H=0.4$. Dashed line corresponds to the theoretical strong rate of
    convergence $0.3$, dotted lines show confidence intervals around the error
  due to the integration error. Weak error corresponds to the functional $f(y)
  := (|y(1)| - 1)^+$.}
  \label{fig:err_s2_H04}
\end{figure}

We consider a linear RDE in $\mathbb{R}^3$ driven by a two-dimensional
fractional Brownian motion with Hurst index $H$. In fact, we consider vector
fields $V_i(y) = A_i y$, $y \in \mathbb{R}^3$, $i=1,\, 2$, with
\begin{equation*}
  A_1 =
  \begin{pmatrix}
    0 & 1 & 2 \\
    -1 & 0 & 1/2 \\
    -2 & -1/2 & 0
  \end{pmatrix},
  \quad A_2 =
  \begin{pmatrix}
    0 & 0.7 & 0.9\\
    -0.7 & 0 & 1\\
    -0.9 & -1 & 0
  \end{pmatrix}.
\end{equation*}

Note that the matrices $A_1$ and $A_2$ are anti-symmetric, implying that the
sphere $S^2$ is invariant under the solution of the SDE.  Note
  that the RDE driven by these vector fields is rather challenging from a
  numerical point of view, if we try to solve them using general,
  non-geometric schemes. In particular, in the case $H=1/2$, the equation is
  non-hypoelliptic and provides a smooth example in which the standard Euler
  scheme only has weak convergence rate $1/2$. In the case $H \neq 1/2$,
  we cannot expect any smoothing properties of the solution, either. Formally,
  further note that the vector fields are unbounded, violating one of our
  theoretical assumptions.

We implement the
simplified Euler scheme~\eqref{eq:simplified-stepN-euler}, where the
increments of the fractional Brownian motion were simulated by Hosking's
method, see~\cite{D04}.\footnote{The underlying Gaussian random numbers are
  simulated using the Box-M\"{u}ller method. The pseudo random numbers are
  generated by the Mersenne-Twister~\cite{MT98}.} Hosking's method is an exact
simulation method, i.e., if fed with truly Gaussian random numbers, it will
produce samples from the true distribution of increments of the fractional
Brownian motion. It is similar to the more obvious simulation method based on
the Cholesky factorization of the covariance matrix of the increments, but
preferable in terms of memory requirement, especially when grids of sizes of
up to $2^{14} = 16384$ are considered. As Cholesky's method, the complexity of
simulating the increments of the fractional Brownian motion on a grid with
size $M$ is essentially proportional to $M^2$, so we are working in the
context of Theorem~\ref{thr:fBM-ml-comp} (b). 

Starting at $Y_0 = (1,0,0)$, Figure~\ref{fig:err_s2_H04} shows the strong and
weak convergence of the scheme for $H = 0.4$. More precisely, let $\barY_1^N$
denote the result of the scheme based on a uniform grid on $[0,1]$ based on
$N$ time-step. Then consider $\barY_1^{2N}$ based on the increments of the
\emph{same fBm}.\footnote{In practice, this means that we generated the
  increments of the fBm $X$ on the finer grid $\frac{k}{2N}$, $k=0, \ldots,
  2N$ and then obtained the increments on the coarser grid by adding the
  respective increments on the fine grid.} Then, the lower part of
Figure~\ref{fig:err_s2_H04} shows the Monte Carlo estimator of
$E\left[\left\lvert \barY^N_1 - \barY^{2N}_1 \right\rvert \right]$ plotted
against $N$. We, indeed, observe the expected rate of strong convergence,
which, due to Theorem \ref{cor:simplified-stepN-euler} is $2H - 1/2 = 0.3$,
but only after a prolonged pre-asymptotic phase.

In the upper panel of Figure~\ref{fig:err_s2_H04}, we plot the \emph{weak}
error for the calculation of $E\left[f\left( Y_1 \right) \right]$ for the
functional
\begin{equation*}
  f(y) := (|y| - 1)^+.
\end{equation*}
This implies that $E\left[f\left( Y_1 \right) \right] = 0$, so that we do not
need to carry out lengthy calculations in order to find an appropriately
accurate reference value. The figure indicates that the rate of the weak error
is again equal to the strong rate $0.3$. Note that the same would be true even
in the case $H=1/2$, because the Markov semigroup associated to the solution
(in the case $H=1/2$) is not smoothing and, in addition, the functional $f$ is
non-smooth on $S^2$, i.e., with probability $1$. Again, the roughness of the
driving signal leads to a remarkably strong pre-asymptotic regime. Indeed,
when the grid is too coarse, then the weak approximation error can be
huge. Visually, it seems that the asymptotic error analysis accurately
describes the true error when the mesh of the grid is at least around $0.02$
for the case $H = 0.4$.

\begin{figure}[!htp]
  \centering
  \includegraphics[width=\textwidth]{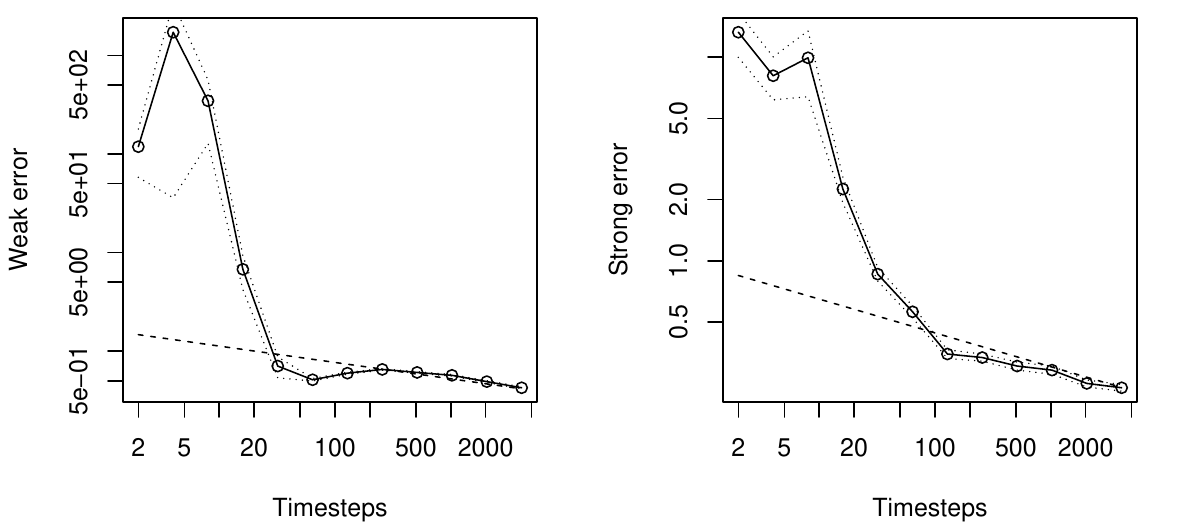}
  \caption{Strong and weak error for a fBm with Hurst index
    $H=0.33$. Dashed line corresponds to the theoretical strong rate of
    convergence $0.16$, dotted lines show confidence intervals around the error
  due to the integration error. Weak error corresponds to the functional $f(y)
  := (|y(1)| - 1)^+$.}
  \label{fig:err_s2_H033}
\end{figure}

Figure~\ref{fig:err_s2_H033} shows strong and weak errors for the same
differential equation and the same function $f$, but in the even rougher case
$H = 0.33$. In this case, the size of the errors for very coarse grids are
even larger than for $H=0.4$, and, moreover, the pre-asymptotic phase seems
even longer: here the mesh of the grid should probably be at least $0.01$ in
order to describe the true computational error by the asymptotic error
bounds.

\subsection{Multilevel Monte Carlo for the linear example}
\label{sec:mult-monte-carlo}

This long pre-asymptotic phase, in which the computational error is very
large, needs to be taken into account when constructing a successful
multi-level estimator: indeed, it is advisable to choose the coarsest grid
used in the multi-level iteration already within the asymptotic
regime. Thus, in the case $H=0.4$, we would recommend to choose $h_0 \le 0.02$
for this particular example. This is remarkably different from the standard
SDE case, where often $h_0$ is chosen to be equal to $T$, i.e., the coarsest
grid contains only the start and end points of the interval $[0,T]$. However,
when employing this strategy for the fBm example here, the constants in the
error bound for the multi-level estimator will completely overshadow the
asymptotic convergence rate, to the extent that even for long computation time
no ``empirical'' convergence is exhibited. Indeed, the coarsest levels then
combine a large error with an even larger variance, and this combination,
while harmless in the asymptotic limit $\varepsilon\to 0$, renders the standard
multi-level construction useless.

Fortunately, the picture is completely different when the coarsest grid is
chosen to be fine enough, in the current example for $H = 0.4$ this means $h_0
\le 0.02$. Then the multi-level algorithm requires considerably less
computational time for the same MSE tolerance than a classical MC estimator,
even for quite moderate levels of the tolerance. For this demonstration, we
choose a different function, namely
\begin{equation*}
  g(y) = |y| \mathbf{1}_{y^1>0}.
\end{equation*}
Indeed, the previously used function $f(y) = (|y| - 1)^+$ has the property
that $f(Y_1) \equiv 0$, so that the variance of $f(\overline{Y}_1^N)$ goes to
$0$ when $N \to \infty$. This, however, makes the basic idea of the multi-level
approach redundant, as the variance of estimators anyway decrease when the
mesh is decreased, even without the telescoping procedure. 

A direct comparison of the performance of the classical and the multi-level
Monte-Carlo estimator is difficult in our situation, as it is very hard to
obtain a reference value, i.e., a ``true result''. Moreover, by the same
reasoning the coefficients $c_i$ in Theorem~\ref{thr:ml-complexity} are very
difficult to estimate. Thus, we use the following
procedure to test the respective performances:
\begin{itemize}
\item Fix $L$, the number of levels in the multi-level procedure, and $h_0$,
  the coarsest grid. Here, we choose $h_0 = 1/64$ and $L = 7$. Thus, the
  finest grid in the multi-level Monte Carlo corresponds to $h_L = 0.00012 =
  1/8192$. As the fixed $L$ is probably sub-optimal, this choice is
  disadvantageous to the multi-level algorithm. We also choose the
  multiplication factor $M = 2$ here, and we parametrize the number of paths
  $N_l$ for the level $l$ by the number of paths $N_0$ at the coarsest level
  by some heuristic. In Table~\ref{tab:comparison-ml-clas}, we choose $N_0 =
  100$. Again, these non-optimal choices favour the classical
  Monte Carlo estimator.
\item Choose the mesh of the classical Monte Carlo estimator to be equal to
  $h_L$, the finest grid in the multi-level hierarchy. This guarantees that
  both estimators have the same bias -- even though we cannot easily estimate
  this bias due to the absence of a reference value.
\item Choose the number of paths in the classical Monte Carlo estimator and
  the number of paths in the coarsest grid for the multi-level estimator such
  that the complexity for the classical Monte Carlo estimator is equal to the
  complexity of the multi-level Monte Carlo estimator. We use an a-priori
  estimate for the complexity.
  \begin{itemize}
  \item For the classical Monte Carlo method, the complexity is estimated by
    the number of trajectories multiplied by the size of the grid.
  \item For the multi-level Monte Carlo method, the complexity at a level $l$
    is estimated by the product of the size of the finer grid and the number
    of trajectories for the level. The overall complexity is estimated by the
    sum of these complexity estimates for the individual levels.
  \end{itemize}
  Note that in practice, this complexity estimate is only given up to a
  constant of proportionality, which can be checked by comparing run-times on
  a computer.
\item Compute the sample variance for both estimators. If the sample variance
  for the multi-level Monte Carlo estimator is (significantly) smaller than
  the sample variance for the classical Monte Carlo estimator, then we,
  indeed, have demonstrated that the multi-level estimator will have a smaller
  MSE than the classical Monte Carlo estimator given the same computational
  budget, i.e., the same complexity.
\end{itemize}
The nice aspect of this procedure is that it allows a reliable comparison of
MSE given a certain complexity, even when the true MSE is not known because of
the absence of a reference value. However, we stress again that the
multi-level estimator constructed above will certainly not be optimal.In order
to take care of the constant in the complexity bound, we also compare
the actual run-times as empirical complexity estimates.

\begin{table}[!htb]
  \centering
  \begin{tabular}{|l|c|c|}
    \hline
    & Multilevel & Classical MC \\
    \hline
    Variance & $1.47 \times 10^{-2}$ & $1.90 \times 10^{-2}$ \\
    Time & $0.99$ s & $3.68$ s \\
    \hline
  \end{tabular}
  \caption{Variance and run-times for the multi-level and the classical Monte
    Carlo algorithm for fixed complexity and bias. Calculations are normalized
    by $N_0 = 100$.}
  \label{tab:comparison-ml-clas}
\end{table}

Table~\ref{tab:comparison-ml-clas} finds that for comparable complexity the
variance associated to the classical Monte Carlo estimator is considerably
lower than the variance of the classical Monte Carlo estimator. It is
interesting to note that the classical Monte Carlo estimator takes
considerably longer computational time. The reason is that the multi-level
algorithm uses the Euler scheme on coarser grids on average than the classical
Monte Carlo algorithm. As the complexity for sampling the increments of the
fractional Brownian motion increases quadratically in the size of the grid
when Hosking's method is applied, this explains why the computational time is
almost four times larger for the classical Monte Carlo method. Note that there
are other exact simulation methods with a complexity of order $\mathcal{O}(M
\log(M))$ in the grid size $M$, and approximate simulation methods even with
order $\mathcal{O}(M)$, see \cite{D04}. However, at least for the present,
linear differential equation, the simulation of the increments of the fBm will
always dominate the Euler steps, even when the complexity does only increase
linearly. Thus, the conclusions of Table~\ref{tab:comparison-ml-clas} should
hold irrespective of the simulation method.\footnote{The following heuristic
  calculation also supports this conclusion: assuming that we replace
  Hosking's algorithm by an algorithm with linear complexity and the same
  constant. Then we can easily predict the run-time of the classical Monte
  Carlo algorithm by dividing the run-time reported in
  Table~\ref{tab:comparison-ml-clas} by the size of the (finest) grid, i.e.,
  by $8192$, which gives a predicted run-time of $0.00045$ seconds. For the
  multilevel Monte Carlo method, the corresponding factor would be (with $M_l
  = T h_l^{-1}$ and $N_l = N_0 2^{-l(1+\beta)/2} = N_0 2^{-0.8l}$)
  \begin{equation*}
    \frac{M_0^2 N_0 + \cdots + M_L^2 N_L}{M_0 N_0 + \cdots + M_L N_L} = 2799,
  \end{equation*}
  giving a predicted run-time of $0.00035$ seconds, which is still lower then
  the predicted run-time for the classical Monte Carlo algorithm.
}

\subsection{A fractional Heston model}
\label{sec:fract-hest-model}

As a third example, let us consider an application from
  finance. One of the most popular asset price models is the Heston model, a
  stochastic volatility model, meaning that the diffusion coefficient of the
  asset price is itself stochastic. Recently, it has emerged that the
  stochastic volatility component is rougher than a standard Brownian motion
  (or a diffusion process) and should be modelled by a fractional Brownian
  motion or a process driven by a fractional Brownian motion, respectively,
  see Gatheral, Jaisson and Rosenbaum \cite{GJR14a} and Bayer, Friz and
  Gatheral~\cite{BFG15} and the references therein. Hence, the following
  \emph{fractional Heston model}---corresponding to the classical Heston model
  in Stratonovich formulation for $h = 1/2$---, see also Guennoun, Jacquier
  and Roome~\cite{GJR14b}, could be considered:
\begin{subequations}
  \label{eq:fHeston}
  \begin{align}
    dS_t &= -\frac{1}{2}\left(v_t + \frac{1}{2} \xi \varrho\right)S_t dt +
    \sqrt{v_t} S_t dW^1_t\\
    dv_t &= \left( \kappa (\theta - v_t) - \frac{1}{4} \xi^2 \right) dt + \xi
    \sqrt{v_t} ( \varrho dW^1_t + \sqrt{1-\varrho^2} dW^2_t ), 
  \end{align}
\end{subequations}
for a standard Brownian motion $W^1$ and an independent fractional Brownian
motion $W^2$ with Hurst index $1/4 < H < 1/2$. That is, in terms of the
RDE~\eqref{SDEIntro}, we choose the Gaussian process $X_t = (W^1_t, W^2_t)$,
which satisfies our assumptions with $\rho = 1/(2H)$. Note that the driving
noise of the price and the variance processes can be correlated using $-1 \le
\varrho \le 1$.

Notice that the fractional Heston model does not satisfy our regularity
assumptions in several ways:
\begin{itemize}
\item the vector fields are unbounded;
\item the vector fields are not differentiable at $v=0$ and not even defined
  for $v<0$.
\end{itemize}
Hence, depending on the choice of parameters and initial values $(S_0, v_0)$,
we may expect the rate to deteriorate. We also need to adjust the scheme in
order to preserve positivity of $(S,v)$, in this case by simply taking the
positive part after each time step.

On the other hand, one may expect the impact of the Brownian motion $W^1$ to
be stronger than the impact of the fractional Brownian motion $W^2$,
especially when considering standard payoff functions depending on the asset
price component $S$ only. Thus, the actual error based on a step size $h$
might look like $h^{-1/2}$ and $h^{-1}$ in the strong and weak sense,
respectively, when the parameters and initial values are ``nice enough'' and
$h$ is not too small.

\begin{figure}[!htp]
  \centering
  \includegraphics[width=\textwidth]{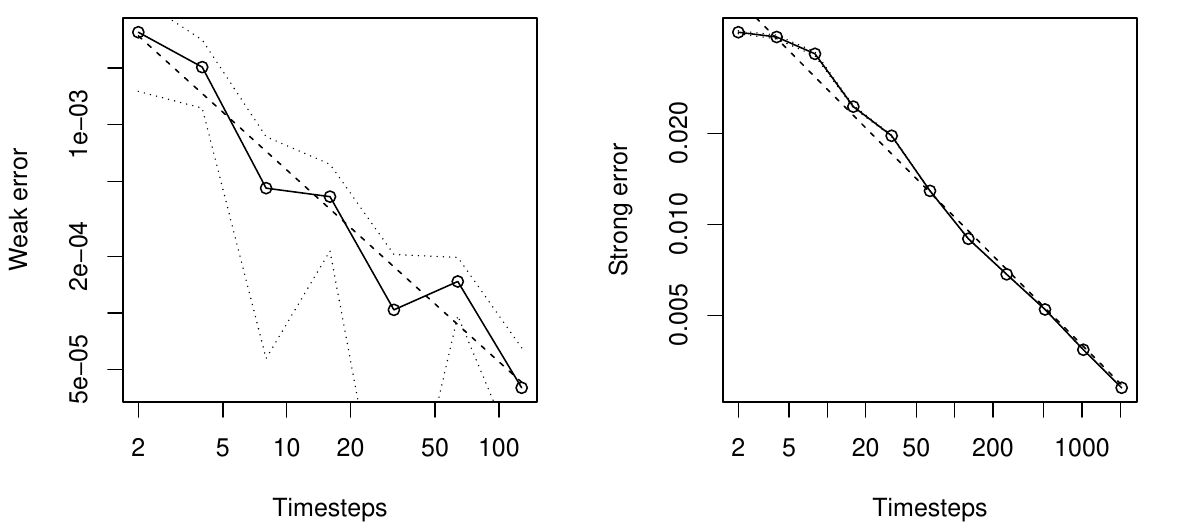}
  \caption{Weak and strong error for the fractional Heston model. Solid lines
    show the empirical errors, dashed lines show regression lines with rates
    $1.02$ in the weak and $0.42$ in the strong case. Dotted lines show
    confidence intervals around the solid lines.}
  \label{fig:fHeston}
\end{figure}
We choose model parameters $\kappa = 1$, $\theta = 0.16$, $\xi = 0.2$, $\rho =
-0.1$, $H = 0.4$. Notice that these parameters easily verify the Feller
condition $2\kappa \theta > \xi^2$ (which would imply $v_t > 0$ for $H =
1/2$). We further choose $S_0 = 1$, $v_0 = \theta$ and $r = 0$ and consider a
European call option with strike price $K = 1$.

As expected and shown in Figure~\ref{fig:fHeston}, we empirically observe
first order weak convergence and strong convergence with rate
$1/2$---actually, regression gives rates $1.02$ and $0.42$, respectively. This
is significantly better than the theoretical (strong) rate $0.3$. We expect
the rate to deteriorate eventually, when the number of timesteps is increased
even further.

We have also implemented the multilevel Monte Carlo algorithm for the
fractional Heston model (using the same parameters as above). We normalize the
workload to (roughly) $2\,610\,000$ individual Euler steps---i.e., we set the
cost of one Euler step to one. We choose $M_0 = M = 2$ and fix the bias by
requiring $h_L = \frac{1}{16}.$ Under these normalizations, the single level and
multilevel algorithms produce the variances and run times reported in
Table~\ref{tab:comparison-ml-clas-fHeston}.
\begin{table}[!htb]
  \centering
  \begin{tabular}{|l|c|c|}
    \hline
    & Multilevel & Classical MC \\
    \hline
    Variance & $1.9 \times 10^{-7}$ & $7.4 \times 10^{-7}$ \\
    Time & $0.88$ s & $0.79$ s \\
    \hline
  \end{tabular}
  \caption{Variance and run-times for the multi-level and the classical Monte
    Carlo algorithm for fixed complexity and bias.}
  \label{tab:comparison-ml-clas-fHeston}
\end{table}
As expected, the multilevel algorithm again clearly outperforms the single
level algorithm in producing an estimate with essentially $4$ times smaller
variance. We note that the parameters of the multilevel algorithm were based
on the \emph{observed} weak and strong rates of convergence, not the
theoretical ones.

\appendix


\section{Elements of rough path theory}

We will now very briefly recall the elements of rough paths theory used in
this paper. For more details we refer to \cite{FV10}, \cite{LCL07}, \cite{LQ02} or \cite{FH14}. Our notation coincides with the one used in  \cite{FV10}.

Let $T^N(\R^d) =\R %
\oplus \R^d \oplus (\R^d \otimes \R^d) \oplus \ldots \oplus (\R^d)^{\otimes
N}$ be the truncated step-$N$ tensor algebra. We are concerned with $T^N(\R^d)$-valued
paths, as naturally given by iterated integrations of $\R^d$-valued smooth paths (``lifted smooth paths").
Such a path $\mathbf{x}$ has natural increments $\mathbf{x}_{s,t}\equiv \mathbf{x}_{s}^{-1}\otimes \mathbf{x}_{t}$.
The projection of such a path $\mathbf{x}$ on the first level is an $\R^d$-valued path and will be denoted by $\pi_1(%
\mathbf{x})$, the projection to $k$th level is denoted by $\pi_k$. Lifted smooth paths actually take values in $G^N(\R^d) \subset T^N(\R^d)$, where $G^N(\R^d)$ denotes the
free step-$N$ nilpotent Lie group with $d$ generators. The (left-invariant) Carnot-Caratheodory metric turns $(G^N(\R^d),d)$ into a metric space.

This already allows (see e.g. \cite{FV10}) to introduce the most commonly used  (homogenous) rough path ``norms"

\begin{eqnarray*}
\left\Vert \mathbf{x}\right\Vert _{p\text{-var;}\left[ 0,T\right] }
&=&\sup_{\left( t_{i}\right) \subset \left[ 0,T\right] }\left(
\sum_{i}d\left( \mathbf{x}_{t_{i}},\mathbf{x}_{t_{i+1}}\right) ^{p}\right)
^{1/p},  \label{DefinitionDoubleBarPvarOnGroup} \\
\left\Vert \mathbf{x}\right\Vert _{1/p\text{-H\"{o}l;}\left[ 0,T\right]
}  &=&   \sup_{0\leq s<t\leq T}\frac{d\left( \mathbf{x}_{s},\mathbf{x}_{t}\right) }{%
\left\vert t-s\right\vert ^{1/p}},
  \notag
\end{eqnarray*}%

and distances

\begin{eqnarray*}
d_{p%
\text{-var;}\left[ 0,T\right] }(\mathbf{x},\mathbf{y}) &= & \left(
\sup_{(t_i) \subset [0,T] }\sum_{i}d\left( \mathbf{x}_{t_{i},t_{i+1}},\mathbf{y}%
_{t_{i},t_{i+1}}\right) ^{p}\right) ^{1/p}, \\
d_{1/p\text{-H\"{o}l;}\left[ 0,T\right] }(\mathbf{x},\mathbf{y})
 &= & \sup_{0\leq s<t\leq T}\frac{d\left( \mathbf{x}_{s,t},\mathbf{y}%
_{s,t}\right) }{\left\vert t-s\right\vert ^{1/p}}
  \notag
\end{eqnarray*}%

where $p \in [1,\infty)$. Define also ``inhomogenous" variation and H\"{o}lder distances as follows.
For $k=$ $1,\ldots ,N,$%
\begin{equation*}
\rho _{p%
\text{-var;}\left[ 0,T\right] }^{\left( k\right) }\left( \mathbf{x},\mathbf{y%
}\right) =\sup_{\left( t_{i}\right) \subset \left[ 0,T\right] }\left(
\sum_{i}\left\vert \pi _{k}\left( \mathbf{x}_{t_{i},t_{i+1}}-\mathbf{y}%
_{t_{i},t_{i+1}}\right) \right\vert ^{p/k}\right) ^{k/p},
\end{equation*}%

and%
\begin{equation*}
\rho _{p\text{-var;}\left[ 0,T\right] }\left( \mathbf{x},\mathbf{y}\right)
=\max_{k=1,\ldots ,N}\rho _{p\text{-var;}\left[ 0,T\right] }^{\left(
k\right) }\left( \mathbf{x},\mathbf{y}\right) .
\end{equation*}%
\newline
Similarly, 

\begin{equation*}
\rho _{1/p\text{-H\"{o}l;}\left[ 0,T\right] }^{\left( k\right) }\left( 
\mathbf{x},\mathbf{y}\right) =\sup_{0\leq s<t\leq T}\frac{\left\vert \pi
_{k}\left( \mathbf{x}_{s,t}-\mathbf{y}_{s,t}\right) \right\vert }{\left\vert
t-s\right\vert ^{k/p}},
\end{equation*}%
\newline
and%
\begin{equation*}
\rho _{1/p\text{-H\"{o}l;}\left[ 0,T\right] }\left( \mathbf{x},\mathbf{y}\right)
 =\max_{k=1,\ldots ,N}\rho _{1/p\text{-H\"{o}l;}\left[ 0,T\right]
}^{\left( k\right) }\left( \mathbf{x},\mathbf{y}\right) .
\end{equation*}

Recall that a control function $\omega $ is a continuous function from  $\{ \, 0 \leq s \leq t \leq T\}$ to $[0,\infty)$, for which $\omega(s,t) + \omega(t,u) \leq \omega(s,u)$ holds for every $s \leq t \leq u$.
Note that $\omega(s,t)^{1/p}$ is a natural generalization of the quantity $|t-s|^{1/p}$ which appeared in the definition of all ``H\"older objects"

$$ \left\Vert \mathbf{x}\right\Vert _{1/p\text{-H\"{o}l;}\left[ 0,T\right]}, \,\,  %
    d_{1/p\text{-H\"{o}l;}\left[ 0,T\right] }(\mathbf{x},\mathbf{y}), \,\,  %
    \rho^k _{1/p\text{-H\"{o}l;}\left[ 0,T\right] }\left( \mathbf{x},\mathbf{y}\right), \,\,  %
    \rho _{1/p\text{-H\"{o}l;}\left[ 0,T\right] }\left( \mathbf{x},\mathbf{y}\right), \,\,  %
 $$
 defined above. Replacing $|t-s|^{1/p}$ by $\omega(s,t)^{1/p}$ then gives rise to similar norms and distances, denoted by 
$$ \left\Vert \mathbf{x}\right\Vert _{p\text{-}\omega;\left[ 0,T\right]}, \,\,  %
    d_{p\text{-}\omega;\left[ 0,T\right] }(\mathbf{x},\mathbf{y}), \,\,  %
    \rho^k _{p\text{-}\omega;\left[ 0,T\right] }\left( \mathbf{x},\mathbf{y}\right), \,\,  %
    \rho _{p\text{-}\omega;\left[ 0,T\right] }\left( \mathbf{x},\mathbf{y}\right). \,\,  %
 $$

By definition, a geometric $1/p$-H\"older rough path $\mathbf{x}$ is a path in 
$T^{\lfloor p \rfloor}(\R^d)$ which can be approximated by lifts of
smooth paths in the $d_{1/p-\text{H\"ol}}$ (equivalently:  $\rho_{1/p\text{-H\"{o}l}}$) metric; geometric $p$-rough
paths are defined similarly (with respect to the variation distance). Necessarily then, any such $\mathbf{x}$ takes values in
$G^{\lfloor p \rfloor}(\R^d) \subset  T^{\lfloor p \rfloor}(\R^d)$. The resulting rough path spaces are know to be Polish and are denoted by

\begin{align*}
C^{0,1/p-\text{H\"ol}}_0([0,T],G^{\lfloor p \rfloor}(\R^d))\quad \text{%
and} \quad C^{0,p-\text{var}}_0([0,T],G^{\lfloor p \rfloor}(\R^d)).
\end{align*}

If $V = (V_i)_{i=1,\ldots,d}$ is a collection of $\operatorname{Lip}^{\gamma}(\mathbb{R}^e)$ vector fields (in the sense of Stein, cf. \cite{FV10}) for some $\gamma > p$ and $\mathbf{x}$ is a geometric $p$-rough path, one can make sense of a unique solution $y\colon [0,T] \to \mathbb{R}^e$ of the equation
\begin{align*}
	dy_t = V(y_t)\,d\mathbf{x}_t; \quad y_0 \in \mathbb{R}^e
\end{align*}
and the solution depends (locally Lipschitz) continuously on the driving signal in the inhomogenous rough paths metric.

\subsection*{Acknowledgements}
\label{sec:acknowledgements}

P.F. has received funding from the European Research Council under the
European Union's Seventh Framework Program (FP7/2007-2013) / ERC grant
agreement nr. 258237. S.R. was supported by a scholarship from the Berlin
Mathematical School (BMS). C.~B.~and P.~F.~ acknowledge funding by the DFG
grants BA5484/1 and FR2943/2). All authors acknowledge support from the DFG
within Research Unit FOR 2402.

 This paper contains
results of S. Riedel's Ph.D. dissertation \cite[Ch.~6]{Rie13}, which was
written in collaboration with CB, PKF and JS.

\bibliographystyle{amsalpha.bst}
\bibliography{refs}

\newcommand{\etalchar}[1]{$^{#1}$}
\def\cprime{$'$} \def\cprime{$'$}
\providecommand{\bysame}{\leavevmode\hbox to3em{\hrulefill}\thinspace}
\providecommand{\MR}{\relax\ifhmode\unskip\space\fi MR }
\providecommand{\MRhref}[2]{%
  \href{http://www.ams.org/mathscinet-getitem?mr=#1}{#2}
}
\providecommand{\href}[2]{#2}
\begin{thebibliography}{CHAN{\etalchar{+}}15}

\bibitem[BFG15]{BFG15}
Christian Bayer, Peter Friz, and Jim Gatheral, \emph{Pricing under rough
  volatility}, Quantitative Finance (2015), to appear.

\bibitem[BSD13]{BSD13}
Denis Belomestny, John Schoenmakers, and Fabian Dickmann, \emph{Multilevel dual
  approach for pricing {A}merican style derivatives}, Finance Stoch.
  \textbf{17} (2013), no.~4, 717--742.

\bibitem[CC80]{CC80}
John M.~C. Clark and R.~J. Cameron, \emph{The maximum rate of convergence of
  discrete approximations for stochastic differential equations}, Stochastic
  differential systems ({P}roc. {IFIP}-{WG} 7/1 {W}orking {C}onf., {V}ilnius,
  1978), Lecture Notes in Control and Information Sci., vol.~25, Springer,
  Berlin, 1980, pp.~162--171.

\bibitem[CF10]{CF10}
Thomas Cass and Peter~K. Friz, \emph{Densities for rough differential equations
  under {H}\"ormander's condition}, Ann. of Math. (2) \textbf{171} (2010),
  no.~3, 2115--2141.

\bibitem[CHAN{\etalchar{+}}15]{CHNST15}
Nathan Collier, Abdul-Lateef Haji-Ali, Fabio Nobile, Erik von Schwerin, and
  Raúl Tempone, \emph{A continuation multilevel monte carlo algorithm}, BIT
  Numerical Mathematics \textbf{55} (2015), no.~2, 399--432 (English).

\bibitem[CLL13]{CLL13}
Thomas Cass, Christian Litterer, and Terry Lyons, \emph{Integrability and tail
  estimates for {G}aussian rough differential equations}, Ann. Probab.
  \textbf{41} (2013), no.~4, 3026--3050.

\bibitem[CQ02]{CQ02}
Laure Coutin and Zhongmin Qian, \emph{Stochastic analysis, rough path analysis
  and fractional {B}rownian motions}, Probab. Theory Related Fields
  \textbf{122} (2002), no.~1, 108--140.

\bibitem[Dav07]{D07}
Alexander~M. Davie, \emph{Differential equations driven by rough paths: an
  approach via discrete approximation}, Appl. Math. Res. Express. AMRX (2007),
  no.~2, Art. ID abm009, 40.

\bibitem[Die04]{D04}
Ton Dieker, \emph{Simulation of fractional {B}rownian motion}, Master's thesis,
  University of Twente, 2004.

\bibitem[DNT12]{DNT12}
Aur{\'e}lien Deya, Andreas Neuenkirch, and Samy Tindel, \emph{A {M}ilstein-type
  scheme without {L}\'evy area terms for {SDE}s driven by fractional {B}rownian
  motion}, Ann. Inst. Henri Poincar\'e Probab. Stat. \textbf{48} (2012), no.~2,
  518--550.

\bibitem[FGGR16]{FGGR16}
Peter~K. Friz, Benjamin Gess, Archil Gulisashvili, and Sebastian Riedel,
  \emph{The {J}ain-{M}onrad criterion for rough paths and applications to
  random {F}ourier series and non-{M}arkovian {H}\"ormander theory}, Ann.
  Probab. \textbf{44} (2016), no.~1, 684--738.

\bibitem[FH14]{FH14}
Peter~K. Friz and Martin Hairer, \emph{A course on rough paths}, Universitext,
  Springer, Cham, 2014, With an introduction to regularity structures.

\bibitem[FR13]{FR12b}
Peter~K. Friz and Sebastian Riedel, \emph{Integrability of (non-)linear rough
  differential equations and integrals}, Stoch. Anal. Appl. \textbf{31} (2013),
  no.~2, 336--358.

\bibitem[FR14]{FR14}
Peter Friz and Sebastian Riedel, \emph{Convergence rates for the full
  {G}aussian rough paths}, Ann. Inst. Henri Poincar\'e Probab. Stat.
  \textbf{50} (2014), no.~1, 154--194.

\bibitem[FV10a]{FV10-2}
Peter~K. Friz and Nicolas~B. Victoir, \emph{Differential equations driven by
  {G}aussian signals}, Ann. Inst. Henri Poincar\'e Probab. Stat. \textbf{46}
  (2010), no.~2, 369--413.

\bibitem[FV10b]{FV10}
\bysame, \emph{Multidimensional stochastic processes as rough paths}, Cambridge
  Studies in Advanced Mathematics, vol. 120, Cambridge University Press,
  Cambridge, 2010, Theory and applications.

\bibitem[Gil08a]{G08a}
Michael~B. Giles, \emph{Improved multilevel {M}onte {C}arlo convergence using
  the {M}ilstein scheme}, Monte {C}arlo and quasi-{M}onte {C}arlo methods 2006,
  Springer, Berlin, 2008, pp.~343--358.

\bibitem[Gil08b]{G08b}
\bysame, \emph{Multilevel {M}onte {C}arlo path simulation}, Oper. Res.
  \textbf{56} (2008), no.~3, 607--617.

\bibitem[GJR14a]{GJR14a}
Jim Gatheral, Thibault Jaisson, and Mathieu Rosenbaum, \emph{Volatility is
  rough}, preprint, 2014.

\bibitem[GJR14b]{GJR14b}
Hamza Guennoun, Antoine Jacquier, and Patrick Roome, \emph{Asymptotic behaviour
  of the fractional heston model}, preprint, 2014.

\bibitem[GS14]{GS14}
Michael~B. Giles and Lukasz Szpruch, \emph{Antithetic multilevel {M}onte
  {C}arlo estimation for multi-dimensional {SDE}s without {L}\'evy area
  simulation}, Ann. Appl. Probab. \textbf{24} (2014), no.~4, 1585--1620.
  \MR{3211005}

\bibitem[HP13]{HP13}
Martin Hairer and Natesh~S. Pillai, \emph{Regularity of laws and ergodicity of
  hypoelliptic {SDE}s driven by rough paths}, Ann. Probab. \textbf{41} (2013),
  no.~4, 2544--2598.

\bibitem[KP92]{KP92}
Peter~E. Kloeden and Eckhard Platen, \emph{Numerical solution of stochastic
  differential equations}, Applications of Mathematics (New York), vol.~23,
  Springer-Verlag, Berlin, 1992.

\bibitem[LCL07]{LCL07}
Terry~J. Lyons, Michael Caruana, and Thierry L{\'e}vy, \emph{Differential
  equations driven by rough paths}, Lecture Notes in Mathematics, vol. 1908,
  Springer, Berlin, 2007, Lectures from the 34th Summer School on Probability
  Theory held in Saint-Flour, July 6--24, 2004, With an introduction concerning
  the Summer School by Jean Picard.

\bibitem[LQ02]{LQ02}
Terry~J. Lyons and Zhongmin Qian, \emph{System control and rough paths}, Oxford
  Mathematical Monographs, Oxford University Press, Oxford, 2002, Oxford
  Science Publications.

\bibitem[MGR09]{MGK09}
Thomas M{\"u}ller-Gronbach and Klaus Ritter, \emph{Variable subspace sampling
  and multi-level algorithms}, Monte {C}arlo and quasi-{M}onte {C}arlo methods
  2008, Springer, Berlin, 2009, pp.~131--156. \MR{2743892 (2012e:65012)}

\bibitem[MN98]{MT98}
M.~Matsumoto and T.~Nishimura, \emph{{M}ersenne {T}wister: A 623-dimensionally
  equidistributed uniform pseudorandom number generator}, ACM Trans. on
  Modeling and Computer Simulation \textbf{8} (1998), no.~1, 3--30.

\bibitem[Rie13]{Rie13}
Sebastian Riedel, \emph{Topics in {G}aussian rough paths theory}, dissertation,
  Technische Universit\"at Berlin, 2013.

\bibitem[SS94]{SS94}
G.~Stolovitzky and K.~R. Sreenivasan, \emph{Kolmogorov's refined similarity
  hypotheses for turbulence and general stochastic processes}, Rev. Mod. Phys.
  \textbf{66} (1994), no.~1, 229--240.

\bibitem[Tal86]{Tal86}
Denis Talay, \emph{Discr\'etisation d'une \'equation diff\'erentielle
  stochastique et calcul approch\'e d'esp\'erances de fonctionnelles de la
  solution}, RAIRO Mod\'el. Math. Anal. Num\'er. \textbf{20} (1986), no.~1,
  141--179.

\end{thebibliography}

\end{document}